\documentclass[reqno,12pt,centertags]{amsart}
\usepackage{geometry}
\geometry{left=3.5cm, right=3.5cm, top=4.5cm, bottom=4.5cm}
\usepackage{amsmath,amsthm,amscd,amssymb,latexsym,upref,stmaryrd}
\usepackage{graphicx}
% Here you can turn off all labels
%\usepackage{showkeys}
%\usepackage[nomsgs,ignoreunlbld]{refcheck}
\usepackage{hyperref}
\usepackage{enumitem}
\usepackage{xcolor}
%%%%%%%%%
%\newcommand{\arxiv}[1]{\href{http://arxiv.org/#1}{arXiv:#1}}
\newcommand*{\mailto}[1]{\href{mailto:#1}{\nolinkurl{#1}}}
\usepackage[numbers,sort&compress]{natbib}

%%%%%%%%%%%%% fonts/sets %%%%%%%%%%%%%%%%%%%%%%%

%%%%%%%%%%%%%%%%%%  abbreviations %%%%%%%%%%%%%%%%%%%

%\newcommand{\dim}{\text{\rm{dim}}}

\newcommand{\beq}{\begin{equation}}
	\newcommand{\eeq}{\end{equation}}
\newcommand{\ba}{\begin{align}}
	\newcommand{\ea}{\end{align}}

% use \hat in subscripts
% and upperlimits of int.

%%%%%%%%%%%%%%%%%%%%%% renewed commands %%%%%%%%%%%%%

\renewcommand{\ln}{\text{\rm ln}}

%%%%%%%%%%%%%%%%%%%%%% operators %%%%%%%%%%%%%%%%%%%%

%\DeclareMathOperator{\ln}{ln}

 \allowdisplaybreaks
\numberwithin{equation}{section}
%%%%%%%%%%%%%%%%%%%% end of  definitions

%%%%%%%%%%%%%%%%%%%%%%%%%%%%%%%%%%%%%%%

\newtheorem{theorem}{Theorem}[section]
\newtheorem{lemma}[theorem]{Lemma}

\theoremstyle{definition}

\begin{document}
	
	%\Volume{}
	%\Year{}
	%\DOIsuffix{mana.200310}
	%\pagespan{1}{}
	%\Receiveddate{}
	%\Reviseddate{}
	%\Accepteddate{}
	%\Dateposted{}
	
	\title[Inverse problems for Dirac operators]
	{Inverse problems for Dirac operators with constant delay less than half of the interval}
	
	\author[F.~Wang]{Feng Wang}
	\address{School of Mathematics and Statistics, Nanjing University of
		Science and Technology, Nanjing, 210094, Jiangsu, China}
	\email{\mailto{wangfengmath@njust.edu.cn}}
	
	\author[C.~F.~Yang]{CHUAN-FU Yang}
	\address{Department of Mathematics, School of Mathematics and Statistics, Nanjing University of
		Science and Technology, Nanjing, 210094, Jiangsu, People's
		Republic of China}
	\email{\mailto{chuanfuyang@njust.edu.cn}}

	\subjclass[2000]{34A55; 34K29}
	\keywords{Dirac-type operator, Constant delay, Inverse spectral problem.}
	\date{\today}
	
	\begin{abstract}
{In this work, we consider Dirac-type operators with a constant delay less than half of the interval and not less than two-fifths of the interval. For our considered Dirac-type operators, two inverse spectral problems are studied. Specifically, reconstruction of two complex $L_{2}$-potentials is studied from complete spectra of two boundary value problems with one common Dirichlet boundary condition and Neumann boundary condition, respectively. We give answers to the full range of questions usually raised in the inverse spectral theory. That is, we give uniqueness, necessary and sufficient conditions of the solvability, reconstruction algorithm and uniform stability for our considered inverse problems.}
	\end{abstract}
	
	\maketitle
	
\section{introduction}
	In the past decade, there appeared a significant interest in inverse problems for Sturm-Liouville-type operators with constant delay:
\begin{equation}\label{0.1}
  -y''(x)+q(x)y(x-a)=\lambda y(x),\quad x\in(0,\pi),
\end{equation}
under two-point boundary conditions (see [2-3, 8-13, 15, 20, 24-26, 29-30] and references therein),  which are often adequate for modelling various real-world processes frequently possessing a nonlocal nature. Here $q(x)$ is a complex-valued function in $L_{2}(a,\pi)$ vanishing on $(0,a)$. It is well known that the potential $q(x)$ is uniquely determined by specifying the spectra of two boundary value problems for equation (\ref{0.1}) with a common boundary condition at zero as soon as $a\in[\frac{2\pi}{5},\pi)$. The recent series of papers [10-12] establishes, however, that it is never possible for $a\in(0,\frac{2\pi}{5})$. For more details, see Introduction in \cite{B-M-S}.

To the best of our knowledge, the first attempt of defining Dirac-type operator with constant delay was made in \cite{Buter2}. They consider the following Dirac-type system with a delay constant $a\!\in\!(0,\pi)$:
\begin{align}\label{1.1}
  By'(x)+Q(x)y(x-a)=\lambda y(x),\quad x\in (0,\pi),
\end{align}
where
\begin{align}
  B=\begin{bmatrix}
    0 & 1   \\  -1 & 0
  \end{bmatrix},\;\;
  Q(x)=\begin{bmatrix}
    q(x) & p(x)   \\  p(x) & -q(x)
  \end{bmatrix},\;\;
   y(x)=\begin{bmatrix}
    y_{1}(x) \\ y_{2}(x)
  \end{bmatrix}, \nonumber
\end{align}
while $q(x)$ and $p(x)$ are complex-valued functions belong to $L_{2}(0,\pi)$, and $Q(x)=0$ on $(0,a)$. For any fixed pair of $\nu, j\in\{1,2\}$, denote by $B_{\nu,j}(Q)$ the boundary value problem for equation (\ref{1.1}) with the boundary conditions
\begin{align}
  y_{\nu}(0)=0,\quad   y_{j}(\pi)=0. \nonumber
\end{align}

In \cite{Buter2}, authors restrict themselves to the case $a\in[\frac{\pi}{2}, \pi)$, when the dependence of the characteristic functions of the problems $B_{\nu,j}(Q)$ on $Q(x)$ is linear. For the considered case, however, they give answers to the full range of questions usually raised in the inverse spectral theory. Specifically, reconstruction of two complex potentials $q$ and $p$ is studied from either complete spectra or subspectra of two problems $(B_{1,1}(Q), B_{1,2}(Q))$. They give conditions on the subspectra that are necessary and sufficient for the unique determination of the potentials. Moreover, necessary and sufficient conditions for the solvability of both inverse problems are obtained. For the inverse problem of recovering from the complete spectra, they establish also uniform stability in each ball of a finite radius.

In \cite{D-V0}, authors restrict themselves to the case $a\in[\frac{\pi}{3},\frac{\pi}{2})$, when the dependence of the characteristic functions of the problems $B_{\nu,j}(Q)$ on $Q(x)$ is nonlinear. They prove that complete spectra of two  problems $(B_{1,1}(Q), B_{1,2}(Q))$ can uniquely determine the two complex potentials $q$ and $p$ in the case $a\in[\frac{2\pi}{5},\frac{\pi}{2})$. Moreover, they provide a counterexample to illustrate that it is impossible in the case $a\in[\frac{\pi}{3},\frac{2\pi}{5})$.

In this paper, we restrict ourselves to the case $a\in[\frac{2\pi}{5},\frac{\pi}{2})$. For our considered case, we study the inverse problems of restoring two complex potentials $q$ and $p$ from complete spectra of two problems $(B_{1,1}(Q), B_{1,2}(Q))$ and $(B_{2,1}(Q), B_{2,2}(Q))$, respectively. We give full answers for our considered inverse problems: uniqueness, solvability and uniform stability.

Additionally, in the classical case $a=0$, inverse problems for (\ref{1.1}) were studied in [1, 16-19, 21-23] and other works. Meanwhile, it is worth mentioning that the first attempt of defining operator (\ref{0.1}) on a star-type graph was made in \cite{W-Y1, W-Y2}, which could be classified as locally nonlocal because the delay on each edge does not affect the other edges. Afterwards, Buterin suggests another concept of operator (\ref{0.1}) with delay on graphs that can be characterized as globally nonlocal, when the delay extends through vertices of the graph (for details, please refer to \cite{Buter}).

The paper is organized as follows. In the next section, we provide specific formulations for two inverse problems and obtain the characteristic functions of the boundary value problems $B_{\nu,j}(Q)$ as well as asymptotic formulae of eigenvalues. Section 3 is devoted to proving the uniqueness of the second inverse problem. In Section 4, we give necessary and sufficient conditions for the solvability of both inverse problems and reconstruction algorithms. In Section 5, the uniform stability of both inverse problems is studied.

\section{problem statement and characteristic function}
First, let us give the following theorem about the asymptotic relations of eigenvalues for the boundary value problems $B_{\nu,j}(Q)$, $\nu, j=1,2$.

\begin{theorem}\label{th1}
For $\nu, j\in\{1,2\}$, the boundary value problem $B_{\nu,j}(Q)$ has infinitely many eigenvalues $\lambda_{n,\nu,j}$, $n\in\mathbb{Z}$, of the form
\begin{align}\label{1.3}
\lambda_{n,\nu,j}=n+\frac{2-\nu-j}{2}+\kappa_{n,\nu,j},\quad \{\kappa_{n,\nu,j}\}_{n\in\mathbb{Z}}\in l_{2}.
\end{align}
\end{theorem}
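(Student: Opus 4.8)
The plan is to derive the characteristic function of each problem $B_{\nu,j}(Q)$ explicitly and then read off the eigenvalue asymptotics from it. First I would set up the solutions of the delay system \eqref{1.1} by the method of steps: on the interval $(0,a)$ the term $Q(x)y(x-a)$ vanishes, so $y$ solves the free Dirac system $By'=\lambda y$, whose fundamental solutions are built from $\cos\lambda x$ and $\sin\lambda x$. Propagating to $(a,\pi)$ via variation of parameters introduces a single integral term involving $Q$ (this is where the hypothesis $a<\pi/2$, so that $x-a<\pi-a\le a$ when... more precisely $a\ge 2\pi/5$ controls the number of nested integral terms), and because $a\ge 2\pi/5 > \pi/3$ in fact only finitely many—indeed here the relevant structure is quadratic—iterations occur. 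I would denote by $S(x,\lambda)$ the solution with $S_1(0,\lambda)=0$, and similarly the solution with $S_2(0,\lambda)=0$, compute them on $(0,a)$ and then on $(a,\pi)$, and substitute into the boundary condition at $x=\pi$ to obtain $\Delta_{\nu,j}(\lambda)$.

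The key structural point is that $\Delta_{\nu,j}(\lambda)$ has the form of a ``main term'' plus an $L_2$-type perturbation: the main term is an entire function of exponential type $\pi$ whose zeros are exactly the integers shifted by $(2-\nu-j)/2$, namely $\sin\lambda\pi$ or $\cos\lambda\pi$ depending on the parity of $\nu+j$, and the correction terms coming from the $Q$-dependent integrals are of the form $\int_a^\pi f(t)\cos\lambda(t+c)\,dt$ with $f\in L_2$, hence by the Riemann–Lebesgue-type lemma their restriction to the lattice $\{n+(2-\nu-j)/2\}_{n\in\mathbb Z}$ lies in $\ell_2$. Thus $\Delta_{\nu,j}(\lambda)=\Delta_{\nu,j}^0(\lambda)+(\text{entire of exp.\ type }\le\pi\text{ with }\ell_2\text{ behaviour on the shifted lattice})$, where $\Delta_{\nu,j}^0$ is $\pm\sin\lambda\pi$ or $\pm\cos\lambda\pi$.

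From here the asymptotics follow by a standard Rouché / counting argument: since $|\Delta_{\nu,j}^0(\lambda)|$ is bounded below by a positive constant on the boundaries of discs of radius $1/2$ centred at $n+(2-\nu-j)/2$ while the perturbation tends to zero along these boundaries, Rouché's theorem gives exactly one eigenvalue $\lambda_{n,\nu,j}$ in each such disc for $|n|$ large, and a finite number of remaining eigenvalues can be indexed to fill out $\mathbb Z$. Writing $\lambda_{n,\nu,j}=n+\frac{2-\nu-j}{2}+\kappa_{n,\nu,j}$ and substituting into $\Delta_{\nu,j}(\lambda_{n,\nu,j})=0$, a Taylor expansion of $\Delta^0_{\nu,j}$ about the lattice point (whose derivative there is $\pm\pi\neq0$) shows $\kappa_{n,\nu,j}$ is, up to a bounded factor, equal to the value of the perturbation at the lattice point, so $\{\kappa_{n,\nu,j}\}\in\ell_2$ by the previous paragraph. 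The main obstacle I anticipate is the bookkeeping in deriving the exact form of the integral terms in $\Delta_{\nu,j}$ on $(a,\pi)$ for all four choices of $(\nu,j)$—in particular keeping track of how $Q=0$ on $(0,a)$ collapses the potentially multi-layered Volterra iteration into a single (or at most quadratic) explicit integral, and verifying that the resulting kernels indeed lie in $L_2$ so that the $\ell_2$ claim on the lattice is legitimate; the Rouché step and the final Taylor expansion are then routine.
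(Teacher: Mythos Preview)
Your proposal is correct and follows essentially the same route as the paper: derive the characteristic functions $\Delta_{\nu,j}(\lambda)$ via the method of steps (the paper writes this as $Y=Y_0+Y_1+Y_2$ using $a\ge\pi/3$), identify each as $\pm\sin\lambda\pi$ or $\pm\cos\lambda\pi$ plus an $L_2$-Fourier-type integral of exponential type $\le\pi-a$, and then invoke Rouch\'e's theorem to obtain the asymptotics~\eqref{1.3}. The paper is in fact terser than you are---it cites \cite{D-V0} for $\nu=1$ and for $\nu=2$ simply records the representations (\ref{2.11})--(\ref{2.12}) and states that the zero asymptotics follow either by the standard Rouch\'e argument or as a direct corollary of Theorem~4 in \cite{Buter1}; your Taylor-expansion step to extract $\{\kappa_{n,\nu,j}\}\in\ell_2$ is exactly what underlies that citation.
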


Note that Theorem \ref{th1} has been proved in \cite{D-V0} for $\nu=1$. The proof of Theorem \ref{th1} for $\nu=2$ will be provided at the end of this section.

Assuming that the delay constant $a\in[\frac{2\pi}{5},\pi)$ is known a priori, we consider the following two inverse problems.

\textbf{Inverse Problem 1.} Given the two spectra $\{\lambda_{n,1,j}\}_{n\in\mathbb{Z}}$, $j=1,2$, find the potential functions $q$ and $p$.

\textbf{Inverse Problem 2.} Given the two spectra $\{\lambda_{n,2,j}\}_{n\in\mathbb{Z}}$, $j=1,2$, find the potential functions $q$ and $p$.

Next, we analyze the characteristic functions of the problems $B_{\nu,j}(Q)$, $\nu, j\!=\!1,2$. Let $Y(x,\lambda)$ be the fundamental matrix-solution of equation (\ref{1.1}) such that
\begin{align}
 Y(x,\lambda)=\begin{bmatrix}
    y_{1,1}(x,\lambda) & y_{1,2}(x,\lambda)   \\  y_{2,1}(x,\lambda) & y_{2,2}(x,\lambda)
  \end{bmatrix},\;\;
 Y(0,\lambda)=\begin{bmatrix}
    1 & 0   \\  0 & 1
  \end{bmatrix}. \nonumber
\end{align}
Then, for $\nu, j\in\{1,2\}$, eigenvalues of the problem $B_{\nu,j}(Q)$ coincide with zeros of the entire function
\begin{align}\label{2.1}
  \Delta_{\nu,j}(\lambda)=y_{j,3-\nu}(\pi,\lambda),
\end{align}
which is called characteristic function of $B_{\nu,j}(Q)$.

When $a\in[\frac{\pi}{3},\frac{\pi}{2}$), it follows from the relation (12) in \cite{Buter2} that
\begin{align}\label{2.3}
  Y(x,\lambda)=Y_{0}(x,\lambda)+Y_{1}(x,\lambda)+Y_{2}(x,\lambda),\qquad\quad
\end{align}
where
\begin{align}\label{2.4}
  Y_{0}(x,\lambda)=\begin{bmatrix}
   \cos\lambda x & -\sin\lambda x   \\  \sin\lambda x  & \cos\lambda x
  \end{bmatrix},\qquad\qquad\qquad\quad\,
\end{align}
\begin{align}\label{2.5}
  Y_{1}(x,\lambda)=B\int_{a}^{x}Y_{0}(x-t,\lambda)Q(t)Y_{0}(t-a,\lambda)dt,
\end{align}
\begin{align}\label{2.6}
  Y_{2}(x,\lambda)=B\int_{2a}^{x}Y_{0}(x-t,\lambda)Q(t)Y_{1}(t-a,\lambda)dt.
\end{align}

Combining these relations (\ref{2.3})-(\ref{2.6}) and taking (\ref{2.1}) into account, by a rather tedious computation, we obtain

\begin{align}
 \Delta_{1,1}(\lambda)\!=\!-\sin\!\lambda\pi\!-\!\!\int_{a}^{\pi}\!\!q(t)\cos\!\lambda(\pi\!-\!2t\!+\!a)dt
 \!+\!\!\int_{a}^{\pi}\!\!p(t)\sin\!\lambda(\pi\!-\!2t\!+\!a)dt \nonumber\\
 -\int_{2a}^{\pi}dt\int_{a}^{t-a}\Big(q(t)q(s)+p(t)p(s)\Big)\sin\!\lambda(\pi\!-\!2t\!+\!2s)ds \qquad\;\,\nonumber\\
 +\int_{2a}^{\pi}dt\int_{a}^{t-a}\Big(q(t)p(s)-p(t)q(s)\Big)\cos\!\lambda(\pi\!-\!2t\!+\!2s)ds, \qquad\nonumber
\end{align}

\begin{align}
 \Delta_{1,2}(\lambda)\!=\!\cos\!\lambda\pi\!-\!\!\int_{a}^{\pi}\!\!q(t)\sin\!\lambda(\pi\!-\!2t\!+\!a)dt
 \!-\!\!\int_{a}^{\pi}\!\!p(t)\cos\!\lambda(\pi\!-\!2t\!+\!a)dt \nonumber \;\;\\
 +\int_{2a}^{\pi}dt\int_{a}^{t-a}\Big(q(t)p(s)-p(t)q(s)\Big)\sin\!\lambda(\pi\!-\!2t\!+\!2s)ds \qquad\;\nonumber\\
 +\int_{2a}^{\pi}dt\int_{a}^{t-a}\Big(q(t)q(s)+p(t)p(s)\Big)\cos\!\lambda(\pi\!-\!2t\!+\!2s)ds, \qquad\!\nonumber
\end{align}

\begin{align}
 \Delta_{2,1}(\lambda)\!=\!\cos\!\lambda\pi\!+\!\!\int_{a}^{\pi}\!\!q(t)\sin\!\lambda(\pi\!-\!2t\!+\!a)dt
 \!+\!\!\int_{a}^{\pi}\!\!p(t)\cos\!\lambda(\pi\!-\!2t\!+\!a)dt \nonumber\\
 +\int_{2a}^{\pi}dt\int_{a}^{t-a}\Big(q(t)p(s)-p(t)q(s)\Big)\sin\!\lambda(\pi\!-\!2t\!+\!2s)ds \qquad\!\nonumber\\
 +\int_{2a}^{\pi}dt\int_{a}^{t-a}\Big(q(t)q(s)+p(t)p(s)\Big)\cos\!\lambda(\pi\!-\!2t\!+\!2s)ds, \quad\;\:\nonumber
\end{align}

\begin{align}
 \Delta_{2,2}(\lambda)\!=\!\sin\!\lambda\pi\!+\!\!\int_{a}^{\pi}\!\!p(t)\sin\!\lambda(\pi\!-\!2t\!+\!a)dt
 \!-\!\!\int_{a}^{\pi}\!\!q(t)\cos\!\lambda(\pi\!-\!2t\!+\!a)dt \nonumber \\
 +\int_{2a}^{\pi}dt\int_{a}^{t-a}\Big(q(t)q(s)+p(t)p(s)\Big)\sin\!\lambda(\pi\!-\!2t\!+\!2s)ds \qquad\!\nonumber\\
 -\int_{2a}^{\pi}dt\int_{a}^{t-a}\Big(q(t)p(s)-p(t)q(s)\Big)\cos\!\lambda(\pi\!-\!2t\!+\!2s)ds. \quad\:\:\nonumber
\end{align}
Changing the variable and interchanging the order of integration, we obtain
\begin{align}\label{2.7.1}
 \Delta_{1,1}(\lambda)\!=\!-\sin\!\lambda\pi\!+\!\!\int_{a-\pi}^{\pi-a}\!\!v_{1,1}(x)\sin\!\lambda xdx\!
 +\!\!\int_{a-\pi}^{\pi-a}\!\!v_{1,2}(x)\cos\!\lambda xdx,\!\!\!
\end{align}

\begin{align}\label{2.8.1}
 \Delta_{1,2}(\lambda)\!=\!\cos\!\lambda\pi\!+\!\!\int_{a-\pi}^{\pi-a}\!\!v_{1,2}(x)\sin\!\lambda xdx
 \!-\!\int_{a-\pi}^{\pi-a}\!\!v_{1,1}(x)\cos\!\lambda xdx,\;
\end{align}

\begin{align}\label{2.7}
 \Delta_{2,1}(\lambda)\!=\!\cos\!\lambda\pi\!+\!\!\int_{a-\pi}^{\pi-a}\!\!v_{2,1}(x)\sin\!\lambda xdx\!
 +\!\!\int_{a-\pi}^{\pi-a}\!\!v_{2,2}(x)\cos\!\lambda xdx,\,
\end{align}

\begin{align}\label{2.8}
 \Delta_{2,2}(\lambda)\!=\!\sin\!\lambda\pi\!+\!\!\int_{a-\pi}^{\pi-a}\!\!v_{2,2}(x)\sin\!\lambda xdx
 \!-\!\int_{a-\pi}^{\pi-a}\!\!v_{2,1}(x)\cos\!\lambda xdx,
\end{align}
where
\begin{align}\label{2.9.1}
v_{1,1}(x)\!=\!\begin{cases}
\frac{1}{2}p(\frac{\pi+a-x}{2})\!-\!\frac{1}{2}\int_{\frac{\pi+2a-x}{2}}^{\pi}\Big[q(t)q(\frac{x+2t-\pi}{2})  \\
\qquad\quad\quad+p(t)p(\frac{x+2t-\pi}{2})\Big]dt, \;\: x\!\in\!(2a\!-\!\pi,\pi\!-\!2a),\\
\\
\frac{1}{2}p(\frac{\pi+a-x}{2}),\;\:x\!\in[a\!-\!\pi,2a\!-\!\pi]\cup[\pi\!-\!2a,\pi\!-\!a],
\end{cases}\!\!\!\!\!\!\!\!\!\!
\end{align}

\begin{align}\label{2.10.1}
v_{1,2}(x)\!=\!\begin{cases}
-\frac{1}{2}q(\frac{\pi+a-x}{2})\!+\!\frac{1}{2}\int_{\frac{\pi+2a-x}{2}}^{\pi}\Big[q(t)p(\frac{x+2t-\pi}{2})  \\
\qquad\quad\quad-p(t)q(\frac{x+2t-\pi}{2})\Big]dt, \;\: x\!\in\!(2a\!-\!\pi,\pi\!-\!2a),\\
\\
-\frac{1}{2}q(\frac{\pi+a-x}{2}),\;\:x\!\in[a\!-\!\pi,2a\!-\!\pi]\cup[\pi\!-\!2a,\pi\!-\!a],
\end{cases}\!\!\!\!\!\!\!\!\!\!
\end{align}

\begin{align}\label{2.9}
v_{2,1}(x)\!=\!\begin{cases}
\frac{1}{2}q(\frac{\pi+a-x}{2})\!+\!\frac{1}{2}\int_{\frac{\pi+2a-x}{2}}^{\pi}\Big[q(t)p(\frac{x+2t-\pi}{2})  \\
\qquad\quad\quad-p(t)q(\frac{x+2t-\pi}{2})\Big]dt, \;\: x\!\in\!(2a\!-\!\pi,\pi\!-\!2a),\\
\\
\frac{1}{2}q(\frac{\pi+a-x}{2}),\;\:x\!\in[a\!-\!\pi,2a\!-\!\pi]\cup[\pi\!-\!2a,\pi\!-\!a],
\end{cases}\!\!\!\!\!\!\!\!\!\!
\end{align}

\begin{align}\label{2.10}
v_{2,2}(x)\!=\!\begin{cases}
\frac{1}{2}p(\frac{\pi+a-x}{2})\!+\!\frac{1}{2}\int_{\frac{\pi+2a-x}{2}}^{\pi}\Big[q(t)q(\frac{x+2t-\pi}{2})  \\
\qquad\quad\quad+p(t)p(\frac{x+2t-\pi}{2})\Big]dt, \;\: x\!\in\!(2a\!-\!\pi,\pi\!-\!2a),\\
\\
\frac{1}{2}p(\frac{\pi+a-x}{2}),\;\:x\!\in[a\!-\!\pi,2a\!-\!\pi]\cup[\pi\!-\!2a,\pi\!-\!a].
\end{cases}\!\!\!\!\!\!\!\!\!\!
\end{align}

Using Euler's formula, the relations (\ref{2.7.1}) and (\ref{2.8.1}) take the forms
\begin{align}\label{2.11.1}
 \Delta_{1,1}(\lambda)=-\sin\lambda\pi+\int_{a-\pi}^{\pi-a}u_{1,1}(x)\exp(i\lambda x)dx,\!\!\!\!\!
\end{align}

\begin{align}\label{2.12.1}
 \Delta_{1,2}(\lambda)=\cos\lambda\pi+\int_{a-\pi}^{\pi-a}u_{1,2}(x)\exp(i\lambda x)dx,
\end{align}
and the relations (\ref{2.7}) and (\ref{2.8}) take the forms
\begin{align}\label{2.11}
 \Delta_{2,1}(\lambda)=\cos\lambda\pi+\int_{a-\pi}^{\pi-a}u_{2,1}(x)\exp(i\lambda x)dx,
\end{align}

\begin{align}\label{2.12}
 \Delta_{2,2}(\lambda)=\sin\lambda\pi+\int_{a-\pi}^{\pi-a}u_{2,2}(x)\exp(i\lambda x)dx,
\end{align}
where
\begin{align}\label{2.13.1}
 u_{1,1}(x)=\frac{v_{1,1}(x)-v_{1,1}(-x)}{2i}+\frac{v_{1,2}(x)+v_{1,2}(-x)}{2},
\end{align}
\begin{align}\label{2.14.1}
 u_{1,2}(x)=\frac{v_{1,2}(x)-v_{1,2}(-x)}{2i}-\frac{v_{1,1}(x)+v_{1,1}(-x)}{2},
\end{align}
\begin{align}\label{2.13}
 u_{2,1}(x)=\frac{v_{2,1}(x)-v_{2,1}(-x)}{2i}+\frac{v_{2,2}(x)+v_{2,2}(-x)}{2},
\end{align}
\begin{align}\label{2.14}
 u_{2,2}(x)=\frac{v_{2,2}(x)-v_{2,2}(-x)}{2i}-\frac{v_{2,1}(x)+v_{2,1}(-x)}{2}.
\end{align}

By the standard approach involving Rouch\'{e}'s theorem, one can prove the following assertion, which can also be obtained as a direct corollary from Theorem 4 in \cite{Buter1}.

\begin{lemma}\label{lem1}
For $j\in\{1,2\}$, the function $\Delta_{2,j}(\lambda)$  has infinitely many zeros $\lambda_{n,2,j}$, $n\in\mathbb{Z}$, of the form (\ref{1.3}) for $\nu=2$.
\end{lemma}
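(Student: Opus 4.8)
The plan is to read the asymptotics directly off the representations \eqref{2.11} and \eqref{2.12}, in which $\Delta_{2,1}(\lambda)=\cos\lambda\pi+g_{1}(\lambda)$ and $\Delta_{2,2}(\lambda)=\sin\lambda\pi+g_{2}(\lambda)$ with $g_{j}(\lambda):=\int_{a-\pi}^{\pi-a}u_{2,j}(x)e^{i\lambda x}\,dx$, and then to locate the zeros near those of the leading terms, i.e. near the half‑integers $n-\tfrac12$ for $j=1$ and the integers $n$ for $j=2$, which are precisely the points $n+\tfrac{2-\nu-j}{2}$ with $\nu=2$. The preliminary point is to check that $u_{2,j}\in L_{2}(a-\pi,\pi-a)$ for $j=1,2$. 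By \eqref{2.13}--\eqref{2.14} this reduces to $v_{2,1},v_{2,2}\in L_{2}(a-\pi,\pi-a)$, and from \eqref{2.9}--\eqref{2.10} this in turn holds because the leading terms $\tfrac12 q(\tfrac{\pi+a-x}{2})$ and $\tfrac12 p(\tfrac{\pi+a-x}{2})$ lie in $L_{2}$ after the affine change of variable $t=\tfrac{\pi+a-x}{2}$ (which carries the relevant $x$‑range into $(a,\pi)$), while the double‑integral terms are even bounded on $(a-\pi,\pi-a)$ by the Cauchy--Schwarz inequality together with $q,p\in L_{2}(0,\pi)$. Hence $g_{j}$ is entire of exponential type at most $\pi-a<\pi$, bounded on the real axis, with $|g_{j}(\sigma+i\tau)|\le C e^{(\pi-a)|\tau|}$.

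Granting this, the existence and approximate location of the zeros is the standard Rouch\'e argument (and is precisely the situation covered by Theorem~4 of \cite{Buter1} applied to \eqref{2.11}--\eqref{2.12}). Fix a small $\delta\in(0,\tfrac14)$ and compare $\Delta_{2,j}$ with its leading term on the circles $\{|\lambda-(n+\tfrac{2-\nu-j}{2})|=\delta\}$ and on the boundaries of the squares $\{|\Re\lambda|\le N+\tfrac12,\ |\Im\lambda|\le N+\tfrac12\}$ for $j=2$ and $\{|\Re\lambda|\le N,\ |\Im\lambda|\le N\}$ for $j=1$. On the horizontal sides the leading term grows like $\tfrac12 e^{\pi|\tau|}$ against $|g_{j}|\le C e^{(\pi-a)|\tau|}$ with $\pi-a<\pi$; on the vertical sides the leading term has modulus $\cosh\pi\tau\ge1$, which dominates $g_{j}$ for $|\tau|$ large by the same exponential comparison and for $|\tau|$ in any bounded set because there $g_{j}\to0$ uniformly as $N\to\infty$ by the Riemann--Lebesgue lemma; the analogous domination holds on the small circles once $|n|$ is large. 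Rouch\'e's theorem then gives exactly one zero of $\Delta_{2,j}$ inside each such circle and, inside each square, exactly as many zeros as the leading term has there, yielding a labelling $\{\lambda_{n,2,j}\}_{n\in\mathbb{Z}}$ with $\lambda_{n,2,j}=n+\tfrac{2-\nu-j}{2}+\kappa_{n,2,j}$; letting $\delta\to0$ shows $\kappa_{n,2,j}\to0$.

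It remains to sharpen $\kappa_{n,2,j}\to0$ to $\{\kappa_{n,2,j}\}\in l_{2}$, and this is the only step beyond a soft estimate. Evaluating \eqref{2.12} at $\lambda=n$ gives $\Delta_{2,2}(n)=g_{2}(n)=\int_{a-\pi}^{\pi-a}u_{2,2}(x)e^{inx}\,dx$, which is $2\pi$ times the $n$‑th Fourier coefficient of $u_{2,2}$ extended by zero to $(-\pi,\pi)$; hence $\{\Delta_{2,2}(n)\}_{n}\in l_{2}$ by Bessel's inequality, and similarly $\{\Delta_{2,1}(n-\tfrac12)\}_{n}=\{g_{1}(n-\tfrac12)\}_{n}\in l_{2}$, being (up to $2\pi$) the Fourier coefficients of $u_{2,1}(x)e^{-ix/2}$. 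On the other hand, applying the mean value theorem along the segment joining $\lambda_{n,2,j}$ to its nominal value and using $\Delta_{2,j}(\lambda_{n,2,j})=0$, we get for $j=2$ that $\Delta_{2,2}(n)=-\kappa_{n,2,2}\int_{0}^{1}\Delta_{2,2}'\bigl(\lambda_{n,2,2}+t(n-\lambda_{n,2,2})\bigr)\,dt$; since $\Delta_{2,2}'(\lambda)=\pi\cos\lambda\pi+\int_{a-\pi}^{\pi-a}ix\,u_{2,2}(x)e^{i\lambda x}\,dx$ has modulus bounded below (say by $\pi/4$) on $\{|\lambda-n|\le\delta\}$ once $|n|$ is large—using the Riemann--Lebesgue lemma again, now for the $L_{2}$‑function $x\,u_{2,2}(x)$—it follows that $|\kappa_{n,2,2}|\le\tfrac{4}{\pi}|\Delta_{2,2}(n)|$, whence $\{\kappa_{n,2,2}\}\in l_{2}$; the case $j=1$ is identical with $\sin$ replacing $\cos$. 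The main obstacle, such as it is, lies precisely here: securing a uniform lower bound on $\Delta_{2,j}'$ near the nominal zeros so that the $l_{2}$‑smallness of the values of $\Delta_{2,j}$ at those points transfers to the $l_{2}$‑smallness of the corrections $\kappa_{n,2,j}$; everything preceding it is soft.
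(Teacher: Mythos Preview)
Your argument is correct and is precisely the standard Rouch\'e approach the paper invokes (and attributes alternatively to Theorem~4 in \cite{Buter1}); you have simply supplied the details the paper omits. One cosmetic slip: for $\nu=2$, $j=2$ formula \eqref{1.3} gives the nominal value $n-1$, so in the mean-value step you should evaluate $\Delta_{2,2}$ at $n-1$ rather than at $n$ (or equivalently relabel), but this index shift does not affect the $l_{2}$ conclusion.
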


We note that Lemma \ref{lem1} immediately implies the assertion of Theorem  \ref{th1} for $\nu=2$..

\section{uniqueness of inverse problem}
	
	As mentioned in the introduction, the uniqueness of Inverse Problem 1 has been prove in \cite{D-V0}. Next, we give a uniqueness theorem for Inverse Problem 2. To this end, for $\nu,j=1,2$, along with the problem $B_{\nu,j}(Q)$, we will consider other problem $B_{\nu,j}(\widetilde{Q})$ of the same form but with a different potential matrix
\begin{equation}
  \widetilde{Q}(x)=\begin{bmatrix}
    \widetilde{q}(x) & \widetilde{p}(x)   \\  \widetilde{p}(x) & -\widetilde{q}(x)
  \end{bmatrix}. \nonumber
\end{equation}
We agree that if a certain symbol $\alpha$ denotes an object related to the problem $B_{\nu,j}(Q)$, then  this symbol with tilde $\widetilde{\alpha}$ will denote the analogous object related to the problem $B_{\nu,j}(\widetilde{Q})$ .

\begin{theorem}\label{th2}
 If $\lambda_{n,2,1}=\widetilde{\lambda}_{n,2,1}$, $\lambda_{n,2,2}=\widetilde{\lambda}_{n,2,2}$, $n\in\mathbb{Z}$, then $q(x)=\widetilde{q}(x)$ and $p(x)=\widetilde{p}(x)$ a.e. on $[a,\pi]$. That is, the specification of two spectra $\{\lambda_{n,2,j}\}_{n\in\mathbb{Z}}$, $j=1,2$, uniquely determines the potentials $q$ and $p$.
\end{theorem}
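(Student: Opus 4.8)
The plan is to mimic the treatment of Inverse Problem~1 in \cite{D-V0}: run the standard three-step reduction of inverse spectral theory — from the two spectra to the characteristic functions, from the characteristic functions to the kernels $v_{2,1},v_{2,2}$, and from these kernels back to the pair $(q,p)$ — and then close the last step by a local, essentially finite argument in which the assumption $a\ge\frac{2\pi}{5}$ is decisive.

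\emph{Step 1 (spectra $\Rightarrow$ characteristic functions $\Rightarrow$ $u_{2,j}$).} First I would show that $\lambda_{n,2,j}=\widetilde{\lambda}_{n,2,j}$ for all $n\in\mathbb Z$ and $j=1,2$ forces $\Delta_{2,j}\equiv\widetilde\Delta_{2,j}$. By (\ref{2.11}), (\ref{2.12}) and Lemma~\ref{lem1}, the difference $\Delta_{2,j}-\widetilde\Delta_{2,j}=\int_{a-\pi}^{\pi-a}\bigl(u_{2,j}(x)-\widetilde u_{2,j}(x)\bigr)e^{i\lambda x}\,dx$ is entire of exponential type at most $\pi-a<\pi$, bounded on the real axis, and vanishes at every $\lambda_{n,2,j}=\widetilde{\lambda}_{n,2,j}$; since that sequence has, asymptotically, one point per interval of unit length, whereas a nonzero entire function of exponential type $\sigma$ bounded on $\mathbb R$ has at most $\sigma/\pi<1$ zeros per unit length (Cartwright class), we get $\Delta_{2,j}\equiv\widetilde\Delta_{2,j}$ (equivalently, one compares Hadamard products and leading asymptotics, as in \cite{Buter2,Buter1}). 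Injectivity of the Fourier transform then yields $u_{2,j}(x)=\widetilde u_{2,j}(x)$ for a.e.\ $x\in(a-\pi,\pi-a)$, $j=1,2$.

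\emph{Step 2 ($u_{2,j}\Rightarrow v_{2,j}\Rightarrow (q,p)$ on the outer intervals).} Writing $V_j=v_{2,j}-\widetilde v_{2,j}$, the relations (\ref{2.13})--(\ref{2.14}) express the odd part of $V_1$ together with the even part of $V_2$ through $u_{2,1}-\widetilde u_{2,1}$, and the even part of $V_1$ together with the odd part of $V_2$ through $u_{2,2}-\widetilde u_{2,2}$; replacing $x$ by $-x$ and taking sums and differences shows that each of $V_1,V_2$ is simultaneously even and odd, so $V_1\equiv V_2\equiv 0$ on $(a-\pi,\pi-a)$. On the outer subintervals $[a-\pi,2a-\pi]\cup[\pi-2a,\pi-a]$, formulas (\ref{2.9})--(\ref{2.10}) reduce to $v_{2,1}(x)=\tfrac12 q\bigl(\tfrac{\pi+a-x}{2}\bigr)$ and $v_{2,2}(x)=\tfrac12 p\bigl(\tfrac{\pi+a-x}{2}\bigr)$ with no integral correction, and the substitution $s=\tfrac{\pi+a-x}{2}$ maps these subintervals onto $[a,\tfrac{3a}{2}]\cup[\pi-\tfrac a2,\pi]$; hence $q=\widetilde q$ and $p=\widetilde p$ a.e.\ on $[a,\tfrac{3a}{2}]\cup[\pi-\tfrac a2,\pi]$.

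\emph{Step 3 (closing the gap; where $a\ge\frac{2\pi}{5}$ enters).} It remains to cover $\bigl(\tfrac{3a}{2},\pi-\tfrac a2\bigr)$, which is nonempty precisely because $a<\tfrac\pi2$. On the inner interval $(2a-\pi,\pi-2a)$, the identity $v_{2,1}=\widetilde v_{2,1}$, after the substitution $s=\tfrac{\pi+a-x}{2}\in\bigl(\tfrac{3a}{2},\pi-\tfrac a2\bigr)$, becomes
\[
 (q-\widetilde q)(s)+\int_{s+a/2}^{\pi}\Bigl[q(t)\bigl(p-\widetilde p\bigr)\bigl(t-s+\tfrac a2\bigr)-p(t)\bigl(q-\widetilde q\bigr)\bigl(t-s+\tfrac a2\bigr)\Bigr]dt=0 ,
\]
where I have already used that $q=\widetilde q$ and $p=\widetilde p$ on $[\pi-\tfrac a2,\pi]$, to which the integration variable $t$ is confined since $s+\tfrac a2>2a\ge\pi-\tfrac a2$ (here $a\ge\tfrac{2\pi}{5}$). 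Furthermore $t-s+\tfrac a2\in\bigl[a,\pi-s+\tfrac a2\bigr]\subset\bigl[a,\tfrac{3a}{2}\bigr)$ because $s>\tfrac{3a}{2}\ge\pi-a$ (again $a\ge\tfrac{2\pi}{5}$), so by Step~2 the integrand vanishes identically and $q(s)=\widetilde q(s)$ for a.e.\ $s\in\bigl(\tfrac{3a}{2},\pi-\tfrac a2\bigr)$; the parallel computation with (\ref{2.10}) gives $p(s)=\widetilde p(s)$ there. Combined with Step~2, this yields $q=\widetilde q$ and $p=\widetilde p$ a.e.\ on $[a,\pi]$. The main obstacle, and the only place the hypothesis is really spent, is the interval bookkeeping in this last step: one must verify that both parameter ranges produced by the inner integral fall inside the sets on which $(q,p)$ is already determined, which amounts exactly to $a\ge\tfrac{2\pi}{5}$; for $a<\tfrac{2\pi}{5}$ these ranges overflow and uniqueness genuinely fails, in line with the non-uniqueness observed in \cite{D-V0}.
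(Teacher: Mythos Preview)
Your proof is correct and follows essentially the same route as the paper: from the coincidence of spectra to $\Delta_{2,j}\equiv\widetilde\Delta_{2,j}$, then to $u_{2,j}=\widetilde u_{2,j}$ by Fourier injectivity, then to the potentials on the outer intervals $[a,\tfrac{3a}{2}]\cup[\pi-\tfrac a2,\pi]$, and finally to the gap $(\tfrac{3a}{2},\pi-\tfrac a2)$ via the interval bookkeeping that hinges on $a\ge\tfrac{2\pi}{5}$. The only differences are cosmetic: in Step~1 the paper invokes the explicit Hadamard products (Lemma~\ref{lem4}) rather than a Cartwright-class density argument, and in Steps~2--3 the paper packages your computation with $v_{2,j}$ into auxiliary functions $w_{2,1},w_{2,2}$ (formulas (\ref{3.1})--(\ref{3.2})) together with Lemmas~\ref{lem2} and~\ref{lem3}, whereas you work with the $v_{2,j}$ directly; the substance of the argument is identical.
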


Before proceeding directly to the proof of Theorem \ref{th2}, we fulfil some preparatory work. Let
\begin{align}\label{3.1}
  w_{2,1}(x)\!=\!(iu_{2,1}\!-\!u_{2,2})(\pi\!+\!a\!-\!2x)\!-\!(iu_{2,1}\!+\!u_{2,2})(2x\!-\!\pi\!-\!a),
\end{align}
\begin{align}\label{3.2}
  w_{2,2}(x)\!=\!(u_{2,1}\!+\!iu_{2,2})(\pi\!+\!a\!-\!2x)\!+\!(u_{2,1}\!-\!iu_{2,2})(2x\!-\!\pi\!-\!a),
\end{align}
then the functions $u_{2,1}(x)$ and $u_{2,2}(x)$ in $L_{2}(a-\pi,\pi-a)$ uniquely determine the functions $w_{2,1}(x)$ and $w_{2,2}(x)$ in $L_{2}(a,\pi)$.  In addition, the relations (\ref{3.1}) and (\ref{3.2}) imply the estimates
\begin{align}\label{3.2-2}
\begin{cases}
  \|w_{2,1}\|_{L_{2}(a,\pi)}\leq\!2\sqrt{2}\left(\|u_{2,1}\|_{L_{2}(a-\pi,\pi-a)}\!+\!\|u_{2,2}\|_{L_{2}(a-\pi,\pi-a)}\right),\\
  \|w_{2,2}\|_{L_{2}(a,\pi)}\leq\!2\sqrt{2}\left(\|u_{2,1}\|_{L_{2}(a-\pi,\pi-a)}\!+\!\|u_{2,2}\|_{L_{2}(a-\pi,\pi-a)}\right).
\end{cases}
\end{align}

When $x\!\in\![a\!-\!\pi,2a\!-\!\pi]\cup[\pi\!-\!2a,\pi\!-\!a]$, according to (\ref{2.9})-(\ref{2.10}) and (\ref{2.13})-(\ref{2.14}), one can calculate \begin{align}
  2(u_{2,1}(x)+iu_{2,2}(x))=(p\!-\!iq)\!\left(\frac{\pi\!+\!a\!-\!x}{2}\right),\nonumber
\end{align}
\begin{align}
  2(u_{2,1}(x)-iu_{2,2}(x))=(p\!+\!iq)\!\left(\frac{\pi\!+\!a\!+\!x}{2}\right).\nonumber
\end{align}
The changes of variables $t=\frac{\pi+a-x}{2}$ and $t=\frac{\pi+a+x}{2}$, respectively, lead to
\begin{align}
(p\!-\!iq)(t)=2(u_{2,1}\!+\!iu_{2,2})(\pi\!+\!a\!-\!2t),\quad t\in[a,\frac{3a}{2}]\cup[\pi-\frac{a}{2},\pi],\nonumber
\end{align}
\begin{align}
(p\!+\!iq)(t)=2(u_{2,1}\!-\!iu_{2,2})(2t\!-\!\pi\!-\!a),\quad t\in[a,\frac{3a}{2}]\cup[\pi-\frac{a}{2},\pi].\nonumber
\end{align}
Summing up two equations above and subtracting one from the other, and taking (\ref{3.1})-(\ref{3.2}) into account, we get
\begin{align}
  p(t)=w_{2,2}(t),\quad q(t)=w_{2,1}(t),\quad t\in[a,\frac{3a}{2}]\cup[\pi-\frac{a}{2},\pi].\nonumber
\end{align}

We use the symbol $f|_{\mathcal{S}}$ for denoting the restriction of the function $f$ to the set $\mathcal{S}$. Thus, we have proved the following lemma.

\begin{lemma}\label{lem2}
The following relations hold:
\begin{equation}\label{3.3}
\begin{cases}
  q|_{[a,\frac{3a}{2}]\cup[\pi-\frac{a}{2},\pi]}=w_{2,1}|_{[a,\frac{3a}{2}]\cup[\pi-\frac{a}{2},\pi]},\\
  p|_{[a,\frac{3a}{2}]\cup[\pi-\frac{a}{2},\pi]}=w_{2,2}|_{[a,\frac{3a}{2}]\cup[\pi-\frac{a}{2},\pi]}.
\end{cases}
\end{equation}
\end{lemma}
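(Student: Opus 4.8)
The plan is to trace through the explicit formulas already assembled in Section 2 and simply evaluate everything on the region where the nonlinear (double-integral) contributions vanish. The key observation is that the double integrals in $\Delta_{2,1}$ and $\Delta_{2,2}$, after the change of variables leading to \eqref{2.9}--\eqref{2.10}, only contribute for $x$ in the \emph{inner} interval $(2a-\pi,\pi-2a)$; on the two \emph{outer} intervals $[a-\pi,2a-\pi]\cup[\pi-2a,\pi-a]$ the functions $v_{2,1}$ and $v_{2,2}$ reduce to the pure half-shifts $\tfrac12 q(\tfrac{\pi+a-x}{2})$ and $\tfrac12 p(\tfrac{\pi+a-x}{2})$ respectively. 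This is precisely where the hypothesis $a\in[\tfrac{2\pi}{5},\tfrac{\pi}{2})$ is used: one needs the outer intervals to be nonempty and, more importantly, to cover via the affine maps $t=\tfrac{\pi+a\mp x}{2}$ the union $[a,\tfrac{3a}{2}]\cup[\pi-\tfrac a2,\pi]$ — which is exactly the portion of $[a,\pi]$ that can be read off linearly.

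First I would substitute the outer-interval branches of \eqref{2.9}--\eqref{2.10} into the definitions \eqref{2.13}--\eqref{2.14} of $u_{2,1},u_{2,2}$. Because on $[a-\pi,2a-\pi]\cup[\pi-2a,\pi-a]$ we have $v_{2,1}(x)=\tfrac12 q(\tfrac{\pi+a-x}{2})$ and likewise for $v_{2,2}$, the reflected values $v_{2,k}(-x)$ bring in arguments $\tfrac{\pi+a+x}{2}$, and forming $u_{2,1}\pm i u_{2,2}$ produces the two displayed identities
\[
2\bigl(u_{2,1}(x)+iu_{2,2}(x)\bigr)=(p-iq)\!\left(\tfrac{\pi+a-x}{2}\right),\qquad
2\bigl(u_{2,1}(x)-iu_{2,2}(x)\bigr)=(p+iq)\!\left(\tfrac{\pi+a+x}{2}\right),
\]
valid for $x$ in the two outer intervals. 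Next I would invert the affine substitutions $t=\tfrac{\pi+a-x}{2}$ and $t=\tfrac{\pi+a+x}{2}$: as $x$ ranges over $[\pi-2a,\pi-a]$ the first gives $t\in[a,\tfrac{3a}{2}]$, and over $[a-\pi,2a-\pi]$ the second also lands in $[a,\tfrac{3a}{2}]$; the remaining choices of sign and interval supply $t\in[\pi-\tfrac a2,\pi]$. Adding and subtracting the two resulting equations isolates $p(t)$ and $q(t)$ separately, and comparing with the definitions \eqref{3.1}--\eqref{3.2} of $w_{2,1},w_{2,2}$ — which were set up for exactly this purpose — yields $q(t)=w_{2,1}(t)$ and $p(t)=w_{2,2}(t)$ on $[a,\tfrac{3a}{2}]\cup[\pi-\tfrac a2,\pi]$, which is the claim.

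The step I expect to require the most care is bookkeeping of the interval arithmetic: one must check that under the condition $a\ge\tfrac{2\pi}{5}$ the images of the outer intervals under the two affine maps really do cover all of $[a,\tfrac{3a}{2}]\cup[\pi-\tfrac a2,\pi]$ and nothing spills outside $[a,\pi]$ (so that $q,p$ are defined there and the extension-by-zero convention is respected). Everything else is a direct, if slightly tedious, symbolic manipulation using Euler's formula identities \eqref{2.13}--\eqref{2.14} and the piecewise formulas \eqref{2.9}--\eqref{2.10} already established; no analytic input beyond $L_2$-membership (needed only for the estimates \eqref{3.2-2}, which are immediate from \eqref{3.1}--\eqref{3.2} by the triangle inequality and a crude bound on the affine reparametrisations) is involved.
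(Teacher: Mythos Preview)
Your proposal is correct and follows essentially the same approach as the paper: compute $u_{2,1}\pm iu_{2,2}$ on the outer intervals using the linear branches of \eqref{2.9}--\eqref{2.10}, invert the affine changes $t=\tfrac{\pi+a\mp x}{2}$, and then add/subtract to recover $q,p$ via the definitions \eqref{3.1}--\eqref{3.2}. One minor remark: the specific lower bound $a\ge\tfrac{2\pi}{5}$ is not actually needed for this lemma (any $a<\tfrac{\pi}{2}$ makes the outer intervals nonempty and the affine images work out as you describe); that hypothesis becomes essential only in the next step, Lemma~\ref{lem3}, where one needs the integrand arguments to land in the region already determined here.
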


When $x\!\in(2a\!-\!\pi,\pi\!-\!2a)$, according to (\ref{2.9})-(\ref{2.10}) and (\ref{2.13})-(\ref{2.14}), one can calculate
\begin{align}
  2(u_{2,1}(x)+iu_{2,2}(x))=(p\!-\!iq)\!\left(\frac{\pi\!+\!a\!-\!x}{2}\right) \qquad\qquad\qquad\qquad\qquad\nonumber\\
  +\!\!\int_{\frac{\pi+2a-x}{2}}^{\pi}\!(q(t)\!+\!ip(t))(q\!-\!ip)\!\!\left(\frac{x\!+\!2t\!-\!\pi}{2}\right)dt,\!\!\nonumber
\end{align}
\begin{align}
  2(u_{2,1}(x)-iu_{2,2}(x))=(p\!+\!iq)\!\left(\frac{\pi\!+\!a\!+\!x}{2}\right)\qquad\qquad\qquad\qquad\qquad\nonumber\\
  +\!\!\int_{\frac{\pi+2a+x}{2}}^{\pi}\!(q(t)\!-\!ip(t))(q\!+\!ip)\!\!\left(\frac{2t\!-\!x\!-\!\pi}{2}\right)dt.\!\!\nonumber
\end{align}
The changes of variables $\xi=\frac{\pi+a-x}{2}$ and $\xi=\frac{\pi+a+x}{2}$, respectively, lead to
\begin{align}
  2(u_{2,1}+iu_{2,2})(\pi+a-2\xi)\!=\!(p\!-\!iq)(\xi) \qquad\qquad\qquad\qquad\qquad\;\;\;\nonumber\\
  +\!\int_{\xi+\frac{a}{2}}^{\pi}\!(q(t)\!+\!ip(t))(q\!-\!ip)\!\!\left(t\!-\!\xi\!+\!\frac{a}{2}\right)\!dt,\!\!\!\!\!\!\!\nonumber
\end{align}
\begin{align}
  2(u_{2,1}-iu_{2,2})(2\xi-\pi-a)\!=\!(p\!+\!iq)(\xi) \qquad\qquad\qquad\qquad\qquad\;\;\;\nonumber\\
  +\!\int_{\xi+\frac{a}{2}}^{\pi}\!(q(t)\!-\!ip(t))(q\!+\!ip)\!\!\left(t\!-\!\xi\!+\!\frac{a}{2}\right)\!dt\!\!\!\!\!\!\nonumber
\end{align}
for $\xi\in(\frac{3a}{2},\pi-\frac{a}{2})$. Summing up two equations above and subtracting one from the other, and taking (\ref{3.1})-(\ref{3.2}) into account, we get
\begin{align}\label{3.4}
 w_{2,2}(\xi)=p(\xi)\!+\!\int_{\xi+\frac{a}{2}}^{\pi}\left[q(t)q\!\left(t\!-\!\xi\!+\!\frac{a}{2}\right)\!+\!p(t)p\!\left(t\!-\!\xi\!
  +\!\frac{a}{2}\right)\right]dt,
\end{align}
\begin{align}\label{3.5}
 w_{2,1}(\xi)=q(\xi)\!+\!\int_{\xi+\frac{a}{2}}^{\pi}\left[q(t)p\!\left(t\!-\!\xi\!+\!\frac{a}{2}\right)\!-\!p(t)q\!\left(t\!-\!\xi\!
  +\!\frac{a}{2}\right)\right]dt
\end{align}
for $\xi\in(\frac{3a}{2},\pi-\frac{a}{2})$.

Since $a\in[\frac{2\pi}{5},\frac{\pi}{2})$, it is easy to see that
\begin{equation}
  a<\pi-a\leq\frac{3a}{2}<\pi-\frac{a}{2}\leq2a<\pi. \nonumber
\end{equation}
Moreover, when $\xi\in(\frac{3a}{2},\pi-\frac{a}{2})$, it is easy to find that
\begin{equation}
  2a<\xi+\frac{a}{2}\leq t\leq\pi,\quad a\leq t-\xi+\frac{a}{2}<\pi-a. \nonumber
\end{equation}
So, according to Lemma \ref{lem2}, the relations (\ref{3.4})
and (\ref{3.5}) yield
\begin{align}
 p(\xi)=w_{2,2}(\xi)\!-\!\int_{\xi+\frac{a}{2}}^{\pi}\left[w_{2,1}(t)w_{2,1}\!\left(t\!-\!\xi\!+\!\frac{a}{2}\right)\!+\!w_{2,2}(t)w_{2,2}
  \!\left(t\!-\!\xi\!+\!\frac{a}{2}\right)\right]dt, \nonumber
\end{align}
\begin{align}
   q(\xi)=w_{2,1}(\xi)\!-\!\int_{\xi+\frac{a}{2}}^{\pi}\left[w_{2,1}(t)w_{2,2}\!\left(t\!-\!\xi\!+\!\frac{a}{2}\right)\!-\!w_{2,2}(t)w_{2,1}
  \!\left(t\!-\!\xi\!+\!\frac{a}{2}\right)\right]dt \nonumber
\end{align}
for $\xi\in(\frac{3a}{2},\pi-\frac{a}{2})$.

Thus, we have proved the following lemma.

\begin{lemma}\label{lem3}
The following relations hold:
\begin{align}\label{3.6}
\begin{cases}
  q|_{(\frac{3a}{2},\pi-\frac{a}{2})}=(w_{2,1}-\gamma_{2,1})|_{(\frac{3a}{2},\pi-\frac{a}{2})},\\
  p|_{(\frac{3a}{2},\pi-\frac{a}{2})}=(w_{2,2}-\gamma_{2,2})|_{(\frac{3a}{2},\pi-\frac{a}{2})},
\end{cases}
\end{align}
where
\begin{align}\label{3.7}
\begin{cases}
\gamma_{2,1}(x)\!\!=\!\!\int_{x+\frac{a}{2}}^{\pi}\!\left[w_{2,1}(t)w_{2,2}\!\left(t\!-\!x\!+\!\frac{a}{2}\right)\!-\!w_{2,2}(t)w_{2,1}\!\left(t\!-\!x\!
  +\!\frac{a}{2}\right)\right]\!dt,\\
\gamma_{2,2}(x)\!\!=\!\!\int_{x+\frac{a}{2}}^{\pi}\!\left[w_{2,1}(t)w_{2,1}\!\left(t\!-\!x\!+\!\frac{a}{2}\right)\!+\!w_{2,2}(t)w_{2,2}\!\left(t\!-\!x\!
  +\!\frac{a}{2}\right)\right]\!dt
\end{cases}\!\!\!\!\!\!
\end{align}
for $x\in(\frac{3a}{2},\pi-\frac{a}{2})$.
\end{lemma}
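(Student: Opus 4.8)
The plan is to start from the two identities (\ref{3.4}) and (\ref{3.5}), which for $\xi\in(\frac{3a}{2},\pi-\frac a2)$ express $w_{2,2}(\xi)$ and $w_{2,1}(\xi)$ through $p(\xi)$, $q(\xi)$ and quadratic integral terms in $q$ and $p$ over $t\in[\xi+\frac a2,\pi]$; these identities are themselves consequences of the series representation (\ref{2.3})--(\ref{2.6}) of the fundamental matrix $Y(x,\lambda)$ combined with the definitions (\ref{3.1})--(\ref{3.2}) of $w_{2,1}$ and $w_{2,2}$. Solving (\ref{3.4}) and (\ref{3.5}) for $p(\xi)$ and $q(\xi)$ gives $q(\xi)=w_{2,1}(\xi)-\gamma_{2,1}(\xi)$ and $p(\xi)=w_{2,2}(\xi)-\gamma_{2,2}(\xi)$ with $\gamma_{2,j}$ as in (\ref{3.7}) but with $q$ and $p$ still appearing under the integral sign; the content of the lemma is that those occurrences of $q$ and $p$ may be replaced by $w_{2,1}$ and $w_{2,2}$.

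The key step is therefore a range analysis. Using $a\in[\frac{2\pi}{5},\frac\pi2)$ I would first record the chain
\[
a<\pi-a\le\tfrac{3a}{2}<\pi-\tfrac a2\le 2a<\pi,
\]
the two middle inequalities $\pi-a\le\frac{3a}{2}$ and $\pi-\frac a2\le 2a$ being exactly what the lower bound $a\ge\frac{2\pi}{5}$ provides. Then, for a fixed $\xi\in(\frac{3a}{2},\pi-\frac a2)$ and for $t$ in the range of integration of (\ref{3.4})--(\ref{3.5}), one has $t\ge\xi+\frac a2>2a$, so $t\in(2a,\pi]\subseteq[\pi-\frac a2,\pi]$, and also $a\le t-\xi+\frac a2<\pi-a\le\frac{3a}{2}$, so $t-\xi+\frac a2\in[a,\frac{3a}{2}]$. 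Hence both the argument $t$ and the shifted argument $t-\xi+\frac a2$ lie in the set $[a,\frac{3a}{2}]\cup[\pi-\frac a2,\pi]$ on which Lemma \ref{lem2} identifies $q$ with $w_{2,1}$ and $p$ with $w_{2,2}$.

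Substituting these identifications into the integrands of (\ref{3.4}) and (\ref{3.5}) turns their right-hand sides into $p(\xi)+\gamma_{2,2}(\xi)$ and $q(\xi)+\gamma_{2,1}(\xi)$ with $\gamma_{2,1},\gamma_{2,2}$ precisely as in (\ref{3.7}); rearranging yields (\ref{3.6}). I expect the only delicate point to be the bookkeeping in the range analysis: one must check that every point at which Lemma \ref{lem2} is invoked genuinely belongs to $[a,\frac{3a}{2}]\cup[\pi-\frac a2,\pi]$, and it is here that the hypothesis $a\ge\frac{2\pi}{5}$ — rather than merely $a\ge\frac\pi3$ — is used in an essential way: for $a\in[\frac\pi3,\frac{2\pi}{5})$ the shifted argument $t-\xi+\frac a2$ can reach $\pi-a>\frac{3a}{2}$ while staying below $\pi-\frac a2$, landing in a region where $q$ and $p$ are not yet expressed through $w_{2,1},w_{2,2}$.
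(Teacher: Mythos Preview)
Your proposal is correct and follows essentially the same route as the paper's own proof: solve (\ref{3.4})--(\ref{3.5}) for $p(\xi),q(\xi)$, then use the range analysis $t\in(2a,\pi]\subseteq[\pi-\tfrac a2,\pi]$ and $t-\xi+\tfrac a2\in[a,\pi-a)\subseteq[a,\tfrac{3a}{2}]$ together with Lemma~\ref{lem2} to replace $q,p$ by $w_{2,1},w_{2,2}$ under the integrals. Your commentary on why the threshold $a\ge\tfrac{2\pi}{5}$ is needed for both inclusions is an accurate elaboration of what the paper leaves implicit.
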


Note that the functions $\Delta_{2,1}(\lambda)$ and $\Delta_{2,2}(\lambda)$ take the forms (\ref{2.11}) and (\ref{2.12}), respectively. Hence, we have the following lemma, which is as direct corollary of the corresponding general assertions in \cite{Buter1}.

\begin{lemma}\label{lem4}
The functions $\Delta_{2,1}(\lambda)$ and $\Delta_{2,2}(\lambda)$ are uniquely determined by specifying their zeros. Moreover, the following  representations hold:
\begin{align}\label{1.4}
  \Delta_{2,1}(\lambda)=\prod_{n\in\mathbb{Z}}\frac{\lambda_{n,2,1}-\lambda}{n-\frac{1}{2}}\exp\left(\frac{\lambda}{n-\frac{1}{2}}\right),
  \qquad\qquad\quad\;\;
\end{align}
\begin{align}\label{1.4.1}
\Delta_{2,2}(\lambda)=\pi(\lambda-\lambda_{1,2,2})\!\!\prod_{n\in\mathbb{Z},n\neq1}\!\!
\frac{\lambda_{n,2,2}-\lambda}{n-1}\exp\left(\frac{\lambda}{n-1}\right).\!\!\!\!\!\!\!\!\!\!\!\!\!\!\!
\end{align}
\end{lemma}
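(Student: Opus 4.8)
The plan is to read off (\ref{1.4}) and (\ref{1.4.1}) as Hadamard-type canonical products over the zero sets, reducing the matter to the general factorization theory for entire functions of the form ``$\cos\lambda\pi$ (respectively $\sin\lambda\pi$) plus a Fourier-type perturbation whose kernel lies in $L_{2}$ and is supported strictly inside $[-\pi,\pi]$'', which is exactly the setting of the relevant assertions of \cite{Buter1}. Two ingredients make this applicable and are already in hand. First, the representations (\ref{2.11}) and (\ref{2.12}) exhibit $\Delta_{2,1}(\lambda)-\cos\lambda\pi$ and $\Delta_{2,2}(\lambda)-\sin\lambda\pi$ as integrals of $u_{2,1},u_{2,2}\in L_{2}(a-\pi,\pi-a)$ against $\exp(i\lambda x)$, so each $\Delta_{2,j}$ is entire, of exponential type $\pi$, with the cosine/sine term governing the growth (the perturbation contributes exponential type at most $\pi-a<\pi$). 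Second, Lemma \ref{lem1} --- equivalently Theorem \ref{th1} for $\nu=2$ --- gives the zero asymptotics $\lambda_{n,2,1}=n-\frac{1}{2}+\kappa_{n,2,1}$ and $\lambda_{n,2,2}=n-1+\kappa_{n,2,2}$ with $\{\kappa_{n,2,j}\}_{n\in\bbZ}\in l_{2}$. Hence $\Delta_{2,1}$ and $\Delta_{2,2}$ belong to the class for which \cite{Buter1} supplies both the unique recovery from the zeros and the explicit product expansion, and it remains only to transcribe that conclusion with the indexing and normalization used here.

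For a self-contained route I would argue as follows. Recall the classical genus-one expansions $\cos\lambda\pi=\prod_{n\in\bbZ}\frac{(n-1/2)-\lambda}{n-1/2}\exp(\lambda/(n-1/2))$ and $\sin\lambda\pi=\pi\lambda\prod_{n\in\bbZ,\,n\neq1}\frac{(n-1)-\lambda}{n-1}\exp(\lambda/(n-1))$, and let $P_{2,1}(\lambda)$ and $P_{2,2}(\lambda)$ denote the right-hand sides of (\ref{1.4}) and (\ref{1.4.1}). Using the $l_{2}$-bound on $\{\kappa_{n,2,j}\}$ one checks that these products converge locally uniformly, that $P_{2,1}$ and $P_{2,2}$ are entire functions of sine type with the same indicator diagram $[-i\pi,i\pi]$ as $\cos\lambda\pi$ and $\sin\lambda\pi$, and that they reproduce the asymptotics of $\cos\lambda\pi$ and $\sin\lambda\pi$ away from small neighbourhoods of the zeros. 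Since, by the previous paragraph, $\Delta_{2,1}$ and $\Delta_{2,2}$ have the same indicator diagram, the quotient $\Delta_{2,j}(\lambda)/P_{2,j}(\lambda)$ is entire (the zeros cancel, with multiplicity), zero-free, and of indicator identically zero, hence of minimal exponential type and therefore constant. Evaluating the quotient along the imaginary axis, where $\Delta_{2,j}$ and $P_{2,j}$ both match the corresponding leading term $\cos\lambda\pi$ or $\sin\lambda\pi$, shows this constant equals $1$. This establishes (\ref{1.4}) and (\ref{1.4.1}), and the uniqueness assertion --- that $\Delta_{2,1}$ and $\Delta_{2,2}$ are determined by their respective zero sets --- is then immediate from these formulas.

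The step I expect to be the main obstacle is the growth bookkeeping: verifying that replacing the nodes $n-\frac{1}{2}$ (resp.\ $n-1$) by $\lambda_{n,2,j}$ with only an $l_{2}$ perturbation changes neither the exponential type nor the indicator diagram of the canonical product --- equivalently, that $P_{2,1}$ and $P_{2,2}$ are of sine type --- which is precisely what legitimizes the Liouville-type step for the quotient. This is the standard but somewhat delicate estimate underpinning such representations, and it is exactly what the general statements of \cite{Buter1} package; accordingly, in the write-up I would invoke those after checking, via (\ref{2.11})--(\ref{2.12}) and Lemma \ref{lem1}, that $\Delta_{2,1}$ and $\Delta_{2,2}$ fall within their scope. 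One minor point worth recording is that $\Delta_{2,2}$ has a simple zero near the origin (the node $n=1$), which is why its factorization carries the extra linear factor $\pi(\lambda-\lambda_{1,2,2})$ in place of the two symmetric factors whose product yields $\pi\lambda$ in the expansion of $\sin\lambda\pi$.
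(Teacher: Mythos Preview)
Your proposal is correct and follows essentially the same approach as the paper: both reduce the lemma to the general factorization theory in \cite{Buter1} after noting that the representations (\ref{2.11})--(\ref{2.12}) and the asymptotics from Lemma~\ref{lem1} place $\Delta_{2,1}$ and $\Delta_{2,2}$ in its scope. The only difference is in pinning down the normalization: the paper cites Theorem~5 of \cite{Buter1}, which gives the product up to a factor $\alpha\exp(\beta\lambda)$, and then reads off $\alpha$ and $\beta$ directly from the value and logarithmic derivative of the leading term ($\cos\lambda\pi$, resp.\ $\sin\lambda\pi/\lambda$) at $\lambda=0$, whereas you would compare asymptotics along the imaginary axis---both are valid, the paper's being slightly more expedient.
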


\begin{proof}
Theorem 5 in \cite{Buter1} gives the representation holds:
\begin{align}\label{1.4-1}
  \Delta_{2,1}(\lambda)=\alpha\exp(\beta\lambda)\prod_{n\in\mathbb{Z}}\frac{\lambda_{n,2,1}-\lambda}{n-\frac{1}{2}}\exp
  \left(\frac{\lambda}{n-\frac{1}{2}}\right),
\end{align}
where $\beta=\gamma$ and
\begin{equation}
 \alpha=\lim_{\lambda\rightarrow 0}\cos\lambda\pi=1, \quad \gamma=\lim_{\lambda\rightarrow 0}\frac{d}{d\lambda}\ln(\cos\lambda\pi)=0.\nonumber
\end{equation}
Hence, the formula (\ref{1.4-1}) takes the form as (\ref{1.4}).
As well as Theorem 5 in \cite{Buter1} gives the representation holds:
\begin{align}\label{1.4-2}
  \Delta_{2,2}(\lambda)\!=\!\alpha_{1}\!\exp((\beta_{1}\!\!-\!\!1)\lambda)(\lambda\!-\!\lambda_{1,2,2})\!\!\!\prod_{n\in\mathbb{Z},n\neq1}
  \!\!\!\!\frac{\lambda_{n,2,2}\!-\!\lambda}{n-1}\exp\!\left(\!\frac{\lambda}{n\!-\!1}\!\right),
\end{align}
where $\beta_{1}=1+\gamma_{1}$ and
\begin{equation}
 \alpha_{1}=\lim_{\lambda\rightarrow 0}\frac{\sin\lambda\pi}{\lambda}=\pi, \quad \gamma_{1}=\lim_{\lambda\rightarrow 0}\frac{d}{d\lambda}\ln(\frac{\sin\lambda\pi}{\lambda})=0.\nonumber
\end{equation}
Hence, the formula (\ref{1.4-2}) takes the form as (\ref{1.4.1}).
\end{proof}

Now we are in position to give the proof of Theorem \ref{th2}.

\textbf{Proof of Theorem \ref{th2}.} Since $\lambda_{n,2,1}=\widetilde{\lambda}_{n,2,1}$, $\lambda_{n,2,2}=\widetilde{\lambda}_{n,2,2}$, $n\in\mathbb{Z}$, according to Lemma \ref{lem4}, we have
\begin{equation}
  \Delta_{2,1}(\lambda)=\widetilde{\Delta}_{2,1}(\lambda),\quad \Delta_{2,2}(\lambda)=\widetilde{\Delta}_{2,2}(\lambda),\nonumber
\end{equation}
which yields
\begin{equation}
  \Delta_{2,1}(\lambda)\!-\!\cos\lambda\pi\!=\!\widetilde{\Delta}_{2,1}(\lambda)\!-\!\cos\lambda\pi,\; \Delta_{2,2}(\lambda)\!-\!\sin\lambda\pi\!=\!\widetilde{\Delta}_{2,2}(\lambda)\!-\!\sin\lambda\pi.\nonumber
\end{equation}
From (\ref{2.11}) and (\ref{2.12}), one has
\begin{align}
  \int_{a-\pi}^{\pi-a}u_{2,1}(x)\exp(i\lambda x)dx=\int_{a-\pi}^{\pi-a}\widetilde{u}_{2,1}(x)\exp(i\lambda x)dx,\nonumber\\
  \int_{a-\pi}^{\pi-a}u_{2,2}(x)\exp(i\lambda x)dx=\int_{a-\pi}^{\pi-a}\widetilde{u}_{2,2}(x)\exp(i\lambda x)dx.\nonumber
\end{align}
By the arbitrariness of $\lambda\in\mathbb{C}$, the above two equations imply
\begin{align}\label{3.8}
  u_{2,1}(x)=\widetilde{u}_{2,1}(x),\; u_{2,2}(x)=\widetilde{u}_{2,2}(x),\; a.e. \;on \;[a-\pi,\pi-a].
\end{align}

In view of (\ref{3.1}), (\ref{3.2}) and (\ref{3.8}), we get
\begin{align}\label{3.9}
  w_{2,1}(x)=\widetilde{w}_{2,1}(x),\quad w_{2,2}(x)=\widetilde{w}_{2,2}(x),\quad a.e. \;on \;[a,\pi].
\end{align}
According to Lemmas \ref{lem2} and \ref{lem3}, it follows from (\ref{3.9}) that
\begin{align}
  q(x)=\widetilde{q}(x),\quad p(x)=\widetilde{p}(x),\quad a.e. \;on \;[a,\pi].\nonumber
\end{align}
The proof of Theorem \ref{th2} is finished. $\square$

Let
\begin{align}\label{3.1.1}
  w_{1,1}(x)\!=\!-(u_{1,1}\!+\!iu_{1,2})(\pi\!+\!a\!-\!2x)\!-\!(u_{1,1}\!-\!iu_{1,2})(2x\!-\!\pi\!-\!a),\!\!\!\!
\end{align}
\begin{align}\label{3.2.1}
  w_{1,2}(x)\!=\!(iu_{1,1}\!-\!u_{1,2})(\pi\!+\!a\!-\!2x)\!-\!(iu_{1,1}\!+\!u_{1,2})(2x\!-\!\pi\!-\!a),
\end{align}
then the functions $u_{1,1}(x)$ and $u_{1,2}(x)$ in $L_{2}(a-\pi,\pi-a)$ uniquely determine the functions $w_{1,1}(x)$ and $w_{1,2}(x)$ in $L_{2}(a,\pi)$.  In addition, the relations (\ref{3.1.1}) and (\ref{3.2.1}) imply the estimates
\begin{align}\label{3.2-1}
\begin{cases}
  \|w_{1,1}\|_{L_{2}(a,\pi)}\leq\!2\sqrt{2}\left(\|u_{1,1}\|_{L_{2}(a-\pi,\pi-a)}\!+\!\|u_{1,2}\|_{L_{2}(a-\pi,\pi-a)}\right),\\
  \|w_{1,2}\|_{L_{2}(a,\pi)}\leq\!2\sqrt{2}\left(\|u_{1,1}\|_{L_{2}(a-\pi,\pi-a)}\!+\!\|u_{1,2}\|_{L_{2}(a-\pi,\pi-a)}\right).
\end{cases}
\end{align}

Similar to the above discussion, we can obtain the following lemmas, which are necessary for studying the solvability and stability of Inverse Problem 1.

\begin{lemma}\label{lem2.1}
The following relations hold:
\begin{equation}\label{3.3.1}
\begin{cases}
  q|_{[a,\frac{3a}{2}]\cup[\pi-\frac{a}{2},\pi]}=w_{1,1}|_{[a,\frac{3a}{2}]\cup[\pi-\frac{a}{2},\pi]},\\
  p|_{[a,\frac{3a}{2}]\cup[\pi-\frac{a}{2},\pi]}=w_{1,2}|_{[a,\frac{3a}{2}]\cup[\pi-\frac{a}{2},\pi]}.
\end{cases}
\end{equation}
\end{lemma}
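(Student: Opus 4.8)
The plan is to repeat the computation that established Lemma \ref{lem2}, now using the formulas \eqref{2.9.1}--\eqref{2.10.1} for $v_{1,1}, v_{1,2}$ and the definitions \eqref{2.13.1}--\eqref{2.14.1} of $u_{1,1}, u_{1,2}$ in place of their counterparts with subscript $2$. First I would restrict attention to $x\in[a-\pi,2a-\pi]\cup[\pi-2a,\pi-a]$, where the integral terms in \eqref{2.9.1} and \eqref{2.10.1} are absent, so that $v_{1,1}(x)=\tfrac12 p(\tfrac{\pi+a-x}{2})$ and $v_{1,2}(x)=-\tfrac12 q(\tfrac{\pi+a-x}{2})$, and likewise with $x$ replaced by $-x$ on the mirror-image subinterval. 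Plugging these into \eqref{2.13.1} and \eqref{2.14.1}, one computes the two combinations
\begin{align*}
2\bigl(u_{1,1}(x)+iu_{1,2}(x)\bigr)&=-(q+ip)\!\left(\tfrac{\pi+a-x}{2}\right),\\
2\bigl(u_{1,1}(x)-iu_{1,2}(x)\bigr)&=-(q-ip)\!\left(\tfrac{\pi+a+x}{2}\right),
\end{align*}
the precise right-hand sides being read off from the structure of \eqref{3.1.1}--\eqref{3.2.1}, which are exactly the linear combinations of shifted $u_{1,1},u_{1,2}$ designed to invert these relations.

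Next I would perform the substitutions $t=\tfrac{\pi+a-x}{2}$ and $t=\tfrac{\pi+a+x}{2}$, each of which carries $x\in[a-\pi,2a-\pi]\cup[\pi-2a,\pi-a]$ onto $t\in[a,\tfrac{3a}{2}]\cup[\pi-\tfrac{a}{2},\pi]$, obtaining $(q+ip)(t)=-2(u_{1,1}+iu_{1,2})(\pi+a-2t)$ and $(q-ip)(t)=-2(u_{1,1}-iu_{1,2})(2t-\pi-a)$ on that set. Adding and subtracting these two identities and comparing with the definitions \eqref{3.1.1} of $w_{1,1}$ and \eqref{3.2.1} of $w_{1,2}$ — note that $w_{1,1}$ is built to reproduce $q$ and $w_{1,2}$ to reproduce $p$, with the sign and the factor $i$ arranged accordingly — yields $q(t)=w_{1,1}(t)$ and $p(t)=w_{1,2}(t)$ for $t\in[a,\tfrac{3a}{2}]\cup[\pi-\tfrac{a}{2},\pi]$, which is \eqref{3.3.1}.

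The only genuine point of care — and the step I expect to be the main obstacle — is bookkeeping the signs and the placement of the imaginary unit $i$: the pair $(w_{1,1},w_{1,2})$ in \eqref{3.1.1}--\eqref{3.2.1} differs from $(w_{2,1},w_{2,2})$ in \eqref{3.1}--\eqref{3.2} in a way that exactly compensates for the sign flip $v_{1,2}=-\tfrac12 q(\cdot)$ versus $v_{2,1}=+\tfrac12 q(\cdot)$ and the swap of roles of $q$ and $p$ between the $\nu=1$ and $\nu=2$ cases, so one must verify that the chosen linear combinations really do isolate $q$ in $w_{1,1}$ and $p$ in $w_{1,2}$ rather than mixing them. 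Once the algebra is checked on the interval where no integral term appears, the lemma follows; the analogous statement on $(\tfrac{3a}{2},\pi-\tfrac{a}{2})$ involving a quadratic correction $\gamma_{1,j}$ (the analogue of Lemma \ref{lem3}) is then obtained by the same substitution applied to the full formulas \eqref{2.9.1}--\eqref{2.10.1}, using the inequalities $a<\pi-a\le\tfrac{3a}{2}<\pi-\tfrac{a}{2}\le 2a<\pi$ together with \eqref{3.3.1} to replace $q,p$ under the integral sign by $w_{1,1},w_{1,2}$.
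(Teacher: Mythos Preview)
Your proposal is correct and follows exactly the approach the paper intends: the paper does not write out a separate proof of Lemma~\ref{lem2.1} but simply states that ``Similar to the above discussion, we can obtain the following lemmas,'' referring to the proof of Lemma~\ref{lem2}. Your outline carries out precisely that parallel computation, and a direct check confirms that $u_{1,1}\pm iu_{1,2}=-iv_{1,1}(\pm x)+v_{1,2}(\pm x)$, so the combinations in \eqref{3.1.1}--\eqref{3.2.1} indeed isolate $q$ and $p$ as you claim.
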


\begin{lemma}\label{lem3.1}
The following relations hold:
\begin{align}\label{3.6.1}
\begin{cases}
  q|_{(\frac{3a}{2},\pi-\frac{a}{2})}=(w_{1,1}+\gamma_{1,1})|_{(\frac{3a}{2},\pi-\frac{a}{2})},\\
  p|_{(\frac{3a}{2},\pi-\frac{a}{2})}=(w_{1,2}+\gamma_{1,2})|_{(\frac{3a}{2},\pi-\frac{a}{2})},
\end{cases}
\end{align}
where
\begin{align}\label{3.7.1}
\begin{cases}
  \gamma_{1,1}(x)\!\!=\!\!\int_{x+\frac{a}{2}}^{\pi}\!\left[w_{1,1}(t)w_{1,2}\!\left(t\!-\!x\!+\!\frac{a}{2}\right)\!-\!w_{1,2}(t)w_{1,1}\!\left(t\!-\!x\!
  +\!\frac{a}{2}\right)\right]\!dt,\!\!\\
  \gamma_{1,2}(x)\!\!=\!\!\int_{x+\frac{a}{2}}^{\pi}\!\left[w_{1,1}(t)w_{1,1}\!\left(t\!-\!x\!+\!\frac{a}{2}\right)\!+\!w_{1,2}(t)w_{1,2}\!\left(t\!-\!x\!
  +\!\frac{a}{2}\right)\right]\!dt
\end{cases}\!\!\!\!\!\!
\end{align}
for $x\in(\frac{3a}{2},\pi-\frac{a}{2})$.
\end{lemma}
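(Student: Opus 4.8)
\textbf{Proof proposal for Lemma \ref{lem3.1}.} The plan is to repeat, for the index $\nu=1$, the argument that proved Lemma \ref{lem3}, using now the bookkeeping relations (\ref{3.1.1})--(\ref{3.2.1}) together with (\ref{2.13.1})--(\ref{2.14.1}) and (\ref{2.9.1})--(\ref{2.10.1}), and invoking Lemma \ref{lem2.1} at the last step in place of Lemma \ref{lem2}. First I would note $iu_{1,1}-u_{1,2}=i(u_{1,1}+iu_{1,2})$ and $iu_{1,1}+u_{1,2}=i(u_{1,1}-iu_{1,2})$, so that (\ref{3.1.1})--(\ref{3.2.1}) express both $w_{1,1}$ and $w_{1,2}$ through the two complex combinations $(u_{1,1}\pm iu_{1,2})$ evaluated at $\pi+a-2x$ and at $2x-\pi-a$; for $x\in(\frac{3a}{2},\pi-\frac{a}{2})$ both of these arguments lie in the inner subinterval $(2a-\pi,\pi-2a)$, where the integral terms of $v_{1,1},v_{1,2}$ are active.

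Second, on $(2a-\pi,\pi-2a)$ I would insert (\ref{2.9.1})--(\ref{2.10.1}) into (\ref{2.13.1})--(\ref{2.14.1}) and simplify, repeatedly using the identities $q+ip=i(p-iq)$ and $q-ip=-i(p+iq)$ to collapse the real scalar combinations into complex ones. The expected outcome is
\[
2(u_{1,1}+iu_{1,2})(x)=-(q+ip)\Big(\tfrac{\pi+a-x}{2}\Big)-\int_{\frac{\pi+2a-x}{2}}^{\pi}(p-iq)(t)\,(q-ip)\Big(\tfrac{x+2t-\pi}{2}\Big)\,dt
\]
for $x\in(2a-\pi,\pi-2a)$, together with the companion identity for $2(u_{1,1}-iu_{1,2})(x)$ derived analogously. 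This ``rather tedious computation'' is the one genuine obstacle; everything after it is mechanical.

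Third, in the identity above I would substitute $\xi=\tfrac{\pi+a-x}{2}$, so that $x\in(2a-\pi,\pi-2a)$ corresponds to $\xi\in(\frac{3a}{2},\pi-\frac{a}{2})$, the lower limit $\tfrac{\pi+2a-x}{2}$ becomes $\xi+\tfrac{a}{2}$, and $\tfrac{x+2t-\pi}{2}$ becomes $t-\xi+\tfrac{a}{2}$; in the companion identity I would use $\xi=\tfrac{\pi+a+x}{2}$. Inserting the two resulting relations into (\ref{3.1.1})--(\ref{3.2.1}) and forming the right sum and difference, the half-potential contributions combine to $q(\xi)$, resp.\ $p(\xi)$, while the bilinear terms combine, giving for $\xi\in(\frac{3a}{2},\pi-\frac{a}{2})$
\[
w_{1,1}(\xi)=q(\xi)-\int_{\xi+\frac{a}{2}}^{\pi}\!\Big[q(t)\,p\big(t-\xi+\tfrac{a}{2}\big)-p(t)\,q\big(t-\xi+\tfrac{a}{2}\big)\Big]\,dt,
\]
\[
w_{1,2}(\xi)=p(\xi)-\int_{\xi+\frac{a}{2}}^{\pi}\!\Big[q(t)\,q\big(t-\xi+\tfrac{a}{2}\big)+p(t)\,p\big(t-\xi+\tfrac{a}{2}\big)\Big]\,dt.
\]

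Finally, I would verify the argument ranges: since $a\in[\frac{2\pi}{5},\frac{\pi}{2})$ yields $a<\pi-a\le\frac{3a}{2}<\pi-\frac{a}{2}\le 2a<\pi$, for $\xi\in(\frac{3a}{2},\pi-\frac{a}{2})$ and $t\in(\xi+\tfrac{a}{2},\pi)$ one has $t\in(2a,\pi)\subset[\pi-\tfrac{a}{2},\pi]$ and $t-\xi+\tfrac{a}{2}\in(a,\pi-a)\subset[a,\tfrac{3a}{2}]$, so every argument occurring in the two integrals above lies in $[a,\frac{3a}{2}]\cup[\pi-\frac{a}{2},\pi]$. Hence Lemma \ref{lem2.1} allows replacing $q$ by $w_{1,1}$ and $p$ by $w_{1,2}$ inside those integrals; solving the two displayed relations for $q(\xi)$ and $p(\xi)$ then produces exactly (\ref{3.6.1}) with $\gamma_{1,1},\gamma_{1,2}$ as in (\ref{3.7.1}). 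The only delicate point is the algebraic reduction in the second paragraph; the rest duplicates bookkeeping already carried out for Lemma \ref{lem3}.
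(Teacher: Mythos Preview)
Your proposal is correct and follows essentially the same approach as the paper: the paper does not give a separate proof of Lemma~\ref{lem3.1} but states that it is obtained ``similar to the above discussion,'' meaning the argument for Lemma~\ref{lem3} with $\nu=1$ in place of $\nu=2$. Your steps---computing $2(u_{1,1}\pm iu_{1,2})$ on $(2a-\pi,\pi-2a)$ from (\ref{2.9.1})--(\ref{2.10.1}) and (\ref{2.13.1})--(\ref{2.14.1}), substituting $\xi=\frac{\pi+a\mp x}{2}$, forming the appropriate combinations to isolate $q(\xi)$ and $p(\xi)$, checking the argument ranges via $a\in[\frac{2\pi}{5},\frac{\pi}{2})$, and then invoking Lemma~\ref{lem2.1}---reproduce exactly that parallel computation, and your intermediate displayed identities for $w_{1,1}(\xi)$ and $w_{1,2}(\xi)$ are correct.
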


\begin{lemma}\label{lem4.1}
The functions $\Delta_{1,1}(\lambda)$ and $\Delta_{1,2}(\lambda)$ are uniquely determined by specifying their zeros. Moreover, Moreover, the following  representations hold:
\begin{align}\label{1.4-1}
  \Delta_{1,1}(\lambda)=\pi(\lambda_{0,1,1}-\lambda)\prod_{|n|\in\mathbb{N}}\frac{\lambda_{n,1,1}-\lambda}{n}\exp\left(\frac{\lambda}{n}\right),
\end{align}
\begin{align}\label{1.4.1-1}
\Delta_{1,2}(\lambda)=\prod_{n\in\mathbb{Z}}\frac{\lambda_{n,1,2}-\lambda}{n-\frac{1}{2}}\exp\left(\frac{\lambda}{n-\frac{1}{2}}\right).\qquad\qquad
\end{align}
\end{lemma}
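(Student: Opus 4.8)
The plan is to mimic, almost verbatim, the proof of Lemma \ref{lem4}, now invoking the general assertions of \cite{Buter1} for the representations (\ref{2.11.1}) and (\ref{2.12.1}) instead of (\ref{2.11}) and (\ref{2.12}). Both $\Delta_{1,1}(\lambda)$ and $\Delta_{1,2}(\lambda)$ are entire functions of exponential type $\pi$ of the form $g(\lambda)+\int_{a-\pi}^{\pi-a}u(x)\exp(i\lambda x)\,dx$ with $u\in L_{2}(a-\pi,\pi-a)$, so Theorems 4 and 5 of \cite{Buter1} apply: such a function is reconstructed from its zeros by a Hadamard product of genus one whose leading constant and exponential factor are pinned down by comparison with the model term $g$ near $\lambda=0$. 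By Theorem \ref{th1} (established for $\nu=1$ in \cite{D-V0}), the zero set of $\Delta_{1,1}$ is $\{\lambda_{n,1,1}\}_{n\in\mathbb{Z}}$ with $\lambda_{n,1,1}=n+\kappa_{n,1,1}$, so it contains exactly one zero near each integer, in particular a zero $\lambda_{0,1,1}$ near the origin, whereas the zero set of $\Delta_{1,2}$ is $\{\lambda_{n,1,2}\}_{n\in\mathbb{Z}}$ with $\lambda_{n,1,2}=n-\frac{1}{2}+\kappa_{n,1,2}$, hence has no zero near the origin. The reconstruction-from-zeros claim then follows immediately, since in each case the Hadamard product is built solely from these zeros together with universal normalizing constants.

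For $\Delta_{1,1}$ the model term is $-\sin\lambda\pi$, which has a simple zero at $\lambda=0$. Following the proof of Lemma \ref{lem4}, I would write
\begin{align}
\Delta_{1,1}(\lambda)=\alpha\exp(\beta\lambda)\,(\lambda_{0,1,1}-\lambda)\prod_{|n|\in\mathbb{N}}\frac{\lambda_{n,1,1}-\lambda}{n}\exp\left(\frac{\lambda}{n}\right), \nonumber
\end{align}
and then compute
\begin{align}
\alpha=\lim_{\lambda\to 0}\frac{-\sin\lambda\pi}{-\lambda}=\pi,\qquad \beta=\gamma=\lim_{\lambda\to 0}\frac{d}{d\lambda}\ln\left(\frac{\sin\lambda\pi}{\lambda}\right)=0. \nonumber
\end{align}
Here the factor $(\lambda_{0,1,1}-\lambda)$, rather than $(\lambda-\lambda_{0,1,1})$, is the orientation that absorbs the minus sign of $-\sin\lambda\pi$ and makes $\alpha=\pi>0$; this gives the first representation in the statement. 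For $\Delta_{1,2}$ the model term is $\cos\lambda\pi$, which does not vanish at the origin, so the situation is identical to that of $\Delta_{2,1}$ in Lemma \ref{lem4}: writing $\Delta_{1,2}(\lambda)=\alpha\exp(\beta\lambda)\prod_{n\in\mathbb{Z}}\frac{\lambda_{n,1,2}-\lambda}{n-\frac{1}{2}}\exp\left(\frac{\lambda}{n-\frac{1}{2}}\right)$ and computing $\alpha=\lim_{\lambda\to 0}\cos\lambda\pi=1$ and $\beta=\gamma=\lim_{\lambda\to 0}\frac{d}{d\lambda}\ln(\cos\lambda\pi)=0$ yields the second representation.

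Since this argument is essentially a transcription of the proof of Lemma \ref{lem4}, I do not expect a substantive obstacle; the only points requiring care are the bookkeeping of the normalization constants ($\pi$ versus $1$) and the correct orientation of the simple linear factor --- which is precisely what distinguishes $-\sin\lambda\pi$ from $\sin\lambda\pi$ --- together with the observation, guaranteed by Theorem \ref{th1} via the usual Rouch\'{e} count, that the zeros $\{\lambda_{n,1,1}\}_{n\in\mathbb{Z}}$, one near each integer, are exactly exhausted by the factor $(\lambda_{0,1,1}-\lambda)$ and the product over $|n|\in\mathbb{N}$, so that the index set in the first displayed representation is legitimate.
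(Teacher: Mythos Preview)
Your proposal is correct and follows exactly the approach the paper intends: the paper does not write out a separate proof of Lemma~\ref{lem4.1} at all, merely stating before it that ``similar to the above discussion, we can obtain the following lemmas,'' i.e.\ the proof is the obvious transcription of the proof of Lemma~\ref{lem4} with $-\sin\lambda\pi$ and $\cos\lambda\pi$ in place of $\cos\lambda\pi$ and $\sin\lambda\pi$. Your computation of the normalizing constants ($\alpha=\pi$, $\beta=0$ for $\Delta_{1,1}$ and $\alpha=1$, $\beta=0$ for $\Delta_{1,2}$) and your handling of the orientation of the factor $(\lambda_{0,1,1}-\lambda)$ are precisely what is needed.
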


\section{solvability of inverse problem}

To begin with, let us give a necessary and sufficient condition for the solvability of Inverse Problems 1 and 2, respectively.

\begin{theorem}\label{th3.1}
For any sequences of complex numbers $\{\lambda_{n,1,1}\}_{n\in\mathbb{Z}}$ and $\{\lambda_{n,1,2}\}_{n\in\mathbb{Z}}$ to be the spectra of some boundary value problems $B_{1,1}(Q)$ and $B_{1,2}(Q)$ with $q,p\in L_{2}(0,\pi)$, respectively, it is necessary and sufficient to satisfy the following two conditions:

(i) For $j=1,2$, the sequence $\{\lambda_{n,1,j}\}_{n\in\mathbb{Z}}$ has the form (\ref{1.3}) for $\nu=1$;

(ii) The exponential types of the functions $\Delta_{1,1}(\lambda)\,+\,\sin\lambda\pi$ and $\Delta_{1,2}(\lambda)-\cos\lambda\pi$ do not exceed $\pi-a$, where the functions $\Delta_{1,1}(\lambda)$ and $\Delta_{1,2}(\lambda)$ are determined by the formulae (\ref{1.4-1}) and (\ref{1.4.1-1}), respectively.
\end{theorem}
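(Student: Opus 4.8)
\textbf{Proof strategy for Theorem \ref{th3.1}.}
The plan is to treat the two implications separately, exploiting the fact that for $\nu=1$ the dependence of the characteristic functions on $Q$ is \emph{linear} (unlike the $\nu=2$ case), so that the reconstruction of the potentials from the entire functions $\Delta_{1,1}$, $\Delta_{1,2}$ is essentially explicit through the chain $\Delta_{1,j}\mapsto u_{1,j}\mapsto w_{1,j}\mapsto (q,p)$ provided by Lemmas \ref{lem2.1} and \ref{lem3.1}. For necessity, suppose $\{\lambda_{n,1,j}\}_{n\in\mathbb Z}$ are the spectra of genuine problems $B_{1,1}(Q)$, $B_{1,2}(Q)$ with $q,p\in L_2(0,\pi)$ (equivalently $q,p\in L_2(a,\pi)$, extended by zero). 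Condition (i) is then exactly Theorem \ref{th1} for $\nu=1$, which is quoted from \cite{D-V0}. Condition (ii) follows from the integral representations (\ref{2.11.1}) and (\ref{2.12.1}): since $u_{1,1},u_{1,2}\in L_2(a-\pi,\pi-a)$, the functions $\Delta_{1,1}(\lambda)+\sin\lambda\pi$ and $\Delta_{1,2}(\lambda)-\cos\lambda\pi$ are Fourier--Laplace transforms of $L_2$-functions supported in $[a-\pi,\pi-a]$, hence are entire of exponential type at most $\pi-a$; and the product formulae in Lemma \ref{lem4.1} show these are precisely the functions built from the given zeros, so (ii) holds for the prescribed sequences.

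For sufficiency, assume (i) and (ii). Define $\Delta_{1,1}(\lambda)$ and $\Delta_{1,2}(\lambda)$ by the product formulae (\ref{1.4-1}) and (\ref{1.4.1-1}); by (i) the canonical products converge and, by Hadamard factorization together with the normalization computed as in the proof of Lemma \ref{lem4}, these are entire functions whose only zeros are the prescribed $\lambda_{n,1,j}$ and which satisfy $\Delta_{1,1}(\lambda)=-\sin\lambda\pi+O(\ldots)$, $\Delta_{1,2}(\lambda)=\cos\lambda\pi+O(\ldots)$ along the real axis with $l_2$-type remainders coming from (\ref{1.3}). Now set $g_{1,1}(\lambda):=\Delta_{1,1}(\lambda)+\sin\lambda\pi$ and $g_{1,2}(\lambda):=\Delta_{1,2}(\lambda)-\cos\lambda\pi$. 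By (ii) these are entire of exponential type $\le\pi-a$, and from the asymptotics of the $\lambda_{n,1,j}$ they are square-integrable on $\mathbb R$; by the Paley--Wiener theorem there exist $u_{1,1},u_{1,2}\in L_2(a-\pi,\pi-a)$ with $g_{1,j}(\lambda)=\int_{a-\pi}^{\pi-a}u_{1,j}(x)e^{i\lambda x}\,dx$, i.e. (\ref{2.11.1})--(\ref{2.12.1}) hold. Then define $w_{1,1},w_{1,2}\in L_2(a,\pi)$ by (\ref{3.1.1})--(\ref{3.2.1}), and finally define $q,p$ on $[a,\pi]$ by the formulae of Lemmas \ref{lem2.1} and \ref{lem3.1}: on $[a,\frac{3a}{2}]\cup[\pi-\frac a2,\pi]$ put $q=w_{1,1}$, $p=w_{1,2}$, and on $(\frac{3a}{2},\pi-\frac a2)$ put $q=w_{1,1}+\gamma_{1,1}$, $p=w_{1,2}+\gamma_{1,2}$ with $\gamma_{1,j}$ given by (\ref{3.7.1}) (these are well-defined since the arguments $t-x+\frac a2$ and $t$ in (\ref{3.7.1}) range over $[a,\frac{3a}{2}]\cup[\pi-\frac a2,\pi]$ when $a\in[\frac{2\pi}{5},\frac\pi2)$, where $w_{1,j}$ has already been fixed). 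Extend $q,p$ by zero to $(0,a)$; then $q,p\in L_2(0,\pi)$.

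It remains to verify that the $B_{1,1}(Q)$, $B_{1,2}(Q)$ built from this $Q$ have characteristic functions equal to the $\Delta_{1,j}$ we started with, hence spectra $\{\lambda_{n,1,j}\}$. This is where the \emph{linearity} for $\nu=1$ is essential and is the main point to check: one runs the computation of Section 2 \emph{forward}. Starting from $q,p$ as just constructed, the formulae (\ref{2.9.1})--(\ref{2.10.1}) produce functions $v_{1,1},v_{1,2}$, then (\ref{2.13.1})--(\ref{2.14.1}) produce $\hat u_{1,1},\hat u_{1,2}$, and one must show $\hat u_{1,j}=u_{1,j}$ a.e. This is a bookkeeping identity: the passage $(q,p)\mapsto(v_{1,j})\mapsto(u_{1,j})\mapsto(w_{1,j})\mapsto(q,p)$ is, by Lemmas \ref{lem2.1}--\ref{lem3.1}, the identity, and since every arrow except possibly the construction of $(u_{1,j})$ from $(w_{1,j})$ is invertible, inverting the relations (\ref{3.1.1})--(\ref{3.2.1}) shows $\hat u_{1,j}=u_{1,j}$. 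Consequently $\Delta_{B_{1,j}(Q)}(\lambda)=\Delta_{1,j}(\lambda)$ by (\ref{2.11.1})--(\ref{2.12.1}), so $\{\lambda_{n,1,j}\}$ is exactly the spectrum of $B_{1,j}(Q)$. The main obstacle, then, is not any single hard estimate but the careful verification that the inversion chain of Section 3 is consistent when traversed in the synthesis direction — in particular that the domains of definition $[a,\frac{3a}{2}]\cup[\pi-\frac a2,\pi]$ and $(\frac{3a}{2},\pi-\frac a2)$ tile $[a,\pi]$ and that the nonlinear correction terms $\gamma_{1,j}$ only involve already-reconstructed values of $w_{1,j}$, which is precisely what the inequality $a<\pi-a\le\frac{3a}{2}<\pi-\frac a2\le2a<\pi$ (valid for $a\in[\frac{2\pi}{5},\frac\pi2)$) guarantees; once this is in place, the Paley--Wiener step and the convergence of the canonical products are standard.
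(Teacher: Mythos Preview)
Your argument follows essentially the same route as the paper's proof: necessity from Theorem \ref{th1} and the representations (\ref{2.11.1})--(\ref{2.12.1}); sufficiency by building $\Delta_{1,j}$ from the product formulae, using Paley--Wiener (the paper packages the $L_2$-on-$\mathbb R$ step as Lemma \ref{lem5}, which gives $u_{1,j}\in L_2(-\pi,\pi)$ before condition (ii) cuts the support down) to obtain $u_{1,j}\in L_2(a-\pi,\pi-a)$, and then running the reconstruction chain through Lemmas \ref{lem2.1}--\ref{lem3.1}. One correction: your opening claim that for $\nu=1$ the dependence of the characteristic functions on $Q$ is \emph{linear} is false in this paper's regime $a\in[\tfrac{2\pi}{5},\tfrac\pi2)$ --- the quadratic integral terms in (\ref{2.9.1})--(\ref{2.10.1}) are present for $\nu=1$ exactly as for $\nu=2$ (linearity holds only when $a\ge\tfrac\pi2$, the setting of \cite{Buter2}); this does not damage your proof, since you afterwards correctly handle the nonlinear corrections $\gamma_{1,j}$, but the remark is misleading and should be removed.
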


\begin{theorem}\label{th3}
For any sequences of complex numbers $\{\lambda_{n,2,1}\}_{n\in\mathbb{Z}}$ and $\{\lambda_{n,2,2}\}_{n\in\mathbb{Z}}$ to be the spectra of some boundary value problems $B_{2,1}(Q)$ and $B_{2,2}(Q)$ with $q,p\in L_{2}(0,\pi)$, respectively, it is necessary and sufficient to satisfy the following two conditions:

(i) For $j=1,2$, the sequence $\{\lambda_{n,2,j}\}_{n\in\mathbb{Z}}$ has the form (\ref{1.3}) for $\nu=2$;

(ii) The exponential types of the functions $\Delta_{2,1}(\lambda)-\cos\lambda\pi$ and $\Delta_{2,2}(\lambda)-\sin\lambda\pi$ do not exceed $\pi-a$, where the functions $\Delta_{2,1}(\lambda)$ and $\Delta_{2,2}(\lambda)$ are determined by the formulae (\ref{1.4}) and (\ref{1.4.1}), respectively.
\end{theorem}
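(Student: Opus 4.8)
\textbf{Proof proposal for Theorem \ref{th3}.}

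The plan is to prove necessity first and then sufficiency, treating the two spectra essentially in parallel since the structure of $\Delta_{2,1}$ and $\Delta_{2,2}$ is symmetric. For \emph{necessity}, suppose $\{\lambda_{n,2,1}\}_{n\in\mathbb{Z}}$ and $\{\lambda_{n,2,2}\}_{n\in\mathbb{Z}}$ are the spectra of $B_{2,1}(Q)$ and $B_{2,2}(Q)$ for some $q,p\in L_{2}(0,\pi)$ vanishing on $(0,a)$. Condition (i) is then immediate from Lemma \ref{lem1} (equivalently Theorem \ref{th1} for $\nu=2$). For condition (ii), I would invoke the integral representations (\ref{2.11}) and (\ref{2.12}): since $u_{2,1},u_{2,2}\in L_{2}(a-\pi,\pi-a)$, the functions $\Delta_{2,1}(\lambda)-\cos\lambda\pi=\int_{a-\pi}^{\pi-a}u_{2,1}(x)e^{i\lambda x}dx$ and $\Delta_{2,2}(\lambda)-\sin\lambda\pi=\int_{a-\pi}^{\pi-a}u_{2,2}(x)e^{i\lambda x}dx$ are Paley--Wiener functions supported in $[a-\pi,\pi-a]$, hence entire of exponential type $\le\pi-a$; and by Lemma \ref{lem4} these functions equal the left-hand sides of the product formulae (\ref{1.4}), (\ref{1.4.1}) minus $\cos\lambda\pi$ (resp.\ $\sin\lambda\pi$), so the type bound is a statement purely about the given sequences.

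For \emph{sufficiency}, assume (i) and (ii) hold and define $\Delta_{2,1}(\lambda)$, $\Delta_{2,2}(\lambda)$ by (\ref{1.4}), (\ref{1.4.1}); by Lemma \ref{lem4} these products converge and have the prescribed zeros. The key step is to go backwards through the chain of reductions in Section 3. From (ii) plus the known asymptotics one shows $\Delta_{2,1}(\lambda)-\cos\lambda\pi$ and $\Delta_{2,2}(\lambda)-\sin\lambda\pi$ are entire of exponential type $\le\pi-a$; from condition (i) one checks that the restrictions of these functions to the real axis are square-integrable (the $l_2$-correction in (\ref{1.3}) is exactly what makes $\Delta_{2,j}(\lambda)\mp$ the free term lie in $L_2(\mathbb{R})$ after subtracting the $\cos/\sin$). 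The Paley--Wiener theorem then produces $u_{2,1},u_{2,2}\in L_{2}(a-\pi,\pi-a)$ with (\ref{2.11}), (\ref{2.12}). Next, define $w_{2,1},w_{2,2}\in L_{2}(a,\pi)$ by (\ref{3.1}), (\ref{3.2}); on $[a,\frac{3a}{2}]\cup[\pi-\frac{a}{2},\pi]$ set $q:=w_{2,1}$, $p:=w_{2,2}$ as in Lemma \ref{lem2}, and on $(\frac{3a}{2},\pi-\frac{a}{2})$ set $q:=w_{2,1}-\gamma_{2,1}$, $p:=w_{2,2}-\gamma_{2,2}$ with $\gamma_{2,j}$ given by (\ref{3.7}); these $\gamma_{2,j}$ make sense because they involve values of $w_{2,1},w_{2,2}$ only on the already-defined piece $[a,\frac{3a}{2}]\cup[\pi-\frac{a}{2},\pi]$ (this is precisely the chain of inequalities $a<\pi-a\le\frac{3a}{2}<\pi-\frac{a}{2}\le2a<\pi$ that forces $a\ge\frac{2\pi}{5}$). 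This yields $q,p\in L_{2}(a,\pi)$, extended by zero to $(0,a)$.

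It remains to verify that with these $q,p$ the actual characteristic functions of $B_{2,1}(Q)$, $B_{2,2}(Q)$ coincide with the $\Delta_{2,1},\Delta_{2,2}$ we started from. For this I would run the forward computation of Section 2 in reverse: the definitions of $q,p$ via $w_{2,1},w_{2,2}$ were obtained by \emph{solving} the identities (\ref{3.4}), (\ref{3.5}) (together with Lemma \ref{lem2}) for $w$ in terms of $q,p$; reading those identities in the opposite direction recovers $w_{2,1},w_{2,2}$ from our constructed $q,p$, then (\ref{3.1})--(\ref{3.2}) give back $u_{2,1},u_{2,2}$, and finally (\ref{2.11})--(\ref{2.12}) return the original $\Delta_{2,1},\Delta_{2,2}$; hence their zero sets are the prescribed sequences. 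The main obstacle is this last consistency check: one must make sure that the piecewise recipe for $(q,p)$ is genuinely the inverse of the (nonlinear, Volterra-type) map $(q,p)\mapsto(w_{2,1},w_{2,2})$ on all of $(a,\pi)$, i.e.\ that substituting the constructed $q,p$ into the right-hand sides of (\ref{3.4})--(\ref{3.5}) reproduces $w_{2,1},w_{2,2}$ exactly. Because the integral in (\ref{3.4})--(\ref{3.5}) over $[\xi+\frac{a}{2},\pi]$ only sees arguments $t-\xi+\frac a2\in[a,\pi-a)$, and on that range $(q,p)$ has already been pinned down by Lemma \ref{lem2}, the verification is a finite, non-circular substitution rather than a fixed-point argument — so it reduces to bookkeeping, but it is the step that uses the hypothesis $a\ge\frac{2\pi}{5}$ in an essential way and deserves to be written out carefully. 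Uniqueness of the resulting potentials is already guaranteed by Theorem \ref{th2}.
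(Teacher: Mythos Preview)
Your proposal is correct and follows essentially the same route as the paper's proof: necessity via Theorem~\ref{th1} and the representations (\ref{2.11})--(\ref{2.12}); sufficiency by building $\Delta_{2,j}$ from the products, extracting $u_{2,j}$ via Paley--Wiener, passing to $w_{2,j}$ by (\ref{3.1})--(\ref{3.2}), and defining $q,p$ piecewise by Lemmas~\ref{lem2} and~\ref{lem3}. The only organizational difference is that the paper packages your ``type $\le\pi-a$ plus $L_2$ on $\mathbb{R}$'' Paley--Wiener step into Lemma~\ref{lem5.2} (which first yields $u_{2,j}\in L_2(-\pi,\pi)$ and then uses condition (ii) to cut the support down to $[a-\pi,\pi-a]$), and the paper is terser on the final consistency check, simply referring back to Sections~2--3; your explicit discussion of why that verification is a finite substitution rather than a fixed-point argument is a welcome elaboration.
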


To prove Theorem \ref{th3.1}, we need the following lemma, which has been given (see Lemma 3 in \cite{Buter2}).

\begin{lemma}\label{lem5}
For any complex sequences $\{\lambda_{n,1,1}\}_{n\in\mathbb{Z}}$ and $\{\lambda_{n,1,2}\}_{n\in\mathbb{Z}}$ of the form (\ref{1.3}) for $\nu=1$,
the functions $\Delta_{1,1}(\lambda)$ and $\Delta_{1,2}(\lambda)$ constructed by the formulae in (\ref{1.4-1}) and (\ref{1.4.1-1}) have the following forms:
\begin{align}\label{4.1}
  \Delta_{1,1}(\lambda)=-\sin\lambda\pi+\int_{-\pi}^{\pi}u_{1,1}(x)\exp(i\lambda x)dx,
\end{align}
\begin{align}\label{4.2}
  \Delta_{1,2}(\lambda)=\cos\lambda\pi+\int_{-\pi}^{\pi}u_{1,2}(x)\exp(i\lambda x)dx\quad
\end{align}
for some functions $u_{1,1}(x)$ and $u_{1,2}(x)$ in $L_{2}(-\pi,\pi)$, respectively.
\end{lemma}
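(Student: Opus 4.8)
The statement to prove is Lemma~\ref{lem5}: given two complex sequences $\{\lambda_{n,1,1}\}$, $\{\lambda_{n,1,2}\}$ of the form \eqref{1.3} with $\nu=1$, the canonical products \eqref{1.4-1} and \eqref{1.4.1-1} define entire functions of the form \eqref{4.1}--\eqref{4.2} with $u_{1,1},u_{1,2}\in L_2(-\pi,\pi)$.

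The plan is to reduce everything to the classical Paley--Wiener theory for sine-type functions. First I would verify that the asymptotics \eqref{1.3} make the infinite products converge and define entire functions $\Delta_{1,1}$, $\Delta_{1,2}$ of exponential type: since $\lambda_{n,1,1}=n+\tfrac12+\kappa_{n,1,1}$ with $\{\kappa_{n,1,1}\}\in l_2$, the zero set is an $l_2$-perturbation of $\{n+\tfrac12\}$, which is precisely the zero set of $-\sin\lambda\pi$; similarly $\lambda_{n,1,2}=n+\kappa_{n,1,2}$ perturbs the zero set $\{n\}$ of $\cos\lambda\pi$. The standard theory of sine-type functions (or a direct estimate on the canonical product, comparing it term-by-term with the Hadamard product of $\sin\lambda\pi$ resp. $\cos\lambda\pi$) shows each $\Delta_{1,j}$ is entire of exponential type exactly $\pi$, bounded on the real axis in the appropriate weighted sense, and asymptotically matches $-\sin\lambda\pi$ resp. $\cos\lambda\pi$ along horizontal lines. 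This is exactly the content of the general assertions in \cite{Buter1} that the excerpt has already invoked for Lemma~\ref{lem4} and Lemma~\ref{lem4.1}, so I would cite Theorem~5 (and the accompanying estimates) of \cite{Buter1} rather than redo the product estimates.

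Next I would form the differences $g_1(\lambda):=\Delta_{1,1}(\lambda)+\sin\lambda\pi$ and $g_2(\lambda):=\Delta_{1,2}(\lambda)-\cos\lambda\pi$. Each $g_j$ is entire of exponential type at most $\pi$, and the key point is that $g_j$ is square-integrable along the real line: this follows because both $\Delta_{1,j}$ and the trigonometric comparison function have the same leading behaviour $e^{|\Im\lambda|\pi}$ on horizontal lines, and the $l_2$-nature of the eigenvalue shifts $\{\kappa_{n,1,j}\}$ forces the difference to decay in an $L_2$ sense on $\mathbb{R}$ — again this is part of the standard package in \cite{Buter1}. Once $g_j\in L_2(\mathbb{R})$ and $g_j$ is entire of exponential type $\le\pi$, the Paley--Wiener theorem gives $g_j(\lambda)=\int_{-\pi}^{\pi}u_{1,j}(x)e^{i\lambda x}\,dx$ for a unique $u_{1,j}\in L_2(-\pi,\pi)$. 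Rearranging yields exactly \eqref{4.1} and \eqref{4.2}.

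The main obstacle is the quantitative part: establishing that $g_j$ really is in $L_2(\mathbb{R})$ and of exponential type not exceeding $\pi$ — i.e.\ that subtracting the trigonometric main term genuinely cancels the growth and leaves an $l_2$-controlled remainder. This is where the precise form \eqref{1.3} of the eigenvalue asymptotics is essential, and it is the step that genuinely uses results of \cite{Buter1} (the analogue of the classical fact that a sine-type function whose zeros are an $l_2$-perturbation of $\{n+\tfrac12\}$ differs from $-\sin\lambda\pi$ by a Paley--Wiener function of $L_2$ type). I would therefore structure the proof as: (1) cite the product representations \eqref{1.4-1}, \eqref{1.4.1-1} and the fact that they define entire functions of exponential type $\pi$; (2) quote from \cite{Buter1} the estimate giving $g_j\in L_2(\mathbb{R})$ of type $\le\pi$; (3) apply Paley--Wiener to get \eqref{4.1}--\eqref{4.2}. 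Everything after step~(2) is routine.
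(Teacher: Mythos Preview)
Your overall strategy is correct and is the standard route to this kind of result: form the differences $g_1(\lambda)=\Delta_{1,1}(\lambda)+\sin\lambda\pi$ and $g_2(\lambda)=\Delta_{1,2}(\lambda)-\cos\lambda\pi$, show each is entire of exponential type at most $\pi$ and belongs to $L_2(\mathbb{R})$ (using that the zeros are $l_2$-perturbations of the zeros of the comparison trigonometric function, via the sine-type function machinery in \cite{Buter1}), and then apply the Paley--Wiener theorem. The paper itself does not supply a proof of this lemma at all: it simply records that the result ``has been given (see Lemma~3 in \cite{Buter2})'' and moves on. So there is no ``paper's proof'' to compare against beyond the citation, and your sketch is a reasonable reconstruction of how that cited lemma is established.

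That said, there is a bookkeeping slip in your second paragraph that you should fix before writing this up. From \eqref{1.3} with $\nu=1$ one has
\[
\lambda_{n,1,1}=n+\kappa_{n,1,1},\qquad \lambda_{n,1,2}=n-\tfrac12+\kappa_{n,1,2},
\]
not the other way around; and the zero set of $\sin\lambda\pi$ is $\mathbb{Z}$ while the zero set of $\cos\lambda\pi$ is $\mathbb{Z}+\tfrac12$, again the opposite of what you wrote. Fortunately your definitions of $g_1$ and $g_2$ are taken directly from the statement of the lemma and are correct, so the argument still goes through: $\{\lambda_{n,1,1}\}$ is an $l_2$-perturbation of $\mathbb{Z}$, matching $-\sin\lambda\pi$, and $\{\lambda_{n,1,2}\}$ is an $l_2$-perturbation of $\mathbb{Z}-\tfrac12$, matching $\cos\lambda\pi$. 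Correct those two sentences and the proof is fine.
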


Next, we give the proof of Theorem \ref{th3.1}.

\textbf{Proof of Theorem \ref{th3.1}.} For the necessity, the asymptotic (\ref{1.3}) was already established in Theorem \ref{th1} for $\nu=1$. According to Lemma \ref{lem4.1}, the characteristic functions $\Delta_{1,1}(\lambda)$ and $\Delta_{1,2}(\lambda)$ have the representations (\ref{1.4-1}) and (\ref{1.4.1-1}). Thus, condition (ii) easily follows from representations (\ref{2.11.1}) and (\ref{2.12.1}).

For the sufficiency, we construct the functions  $\Delta_{1,1}(\lambda)$ and $\Delta_{1,2}(\lambda)$ by the representations (\ref{1.4-1}) and (\ref{1.4.1-1}) using the given sequences $\{\lambda_{n,1,1}\}_{n\in\mathbb{Z}}$ and $\{\lambda_{n,1,2}\}_{n\in\mathbb{Z}}$. According to Lemma \ref{lem5},  these functions have the forms (\ref{4.1}) and (\ref{4.2}) with some functions $u_{1,1}(x)$, $u_{1,2}(x)\in L_{2}(-\pi,\pi)$, respectively. Further, condition (ii) along with the Paley-Wiener theorem implies $u_{1,1}(x)=0$ and $u_{1,2}(x)=0$ a.e. on $(-\pi, a-\pi)\cup(\pi-a,\pi)$, that is, representations (\ref{2.11.1}) and  (\ref{2.12.1}) hold. First, construct the functions $w_{1,1}(x)$ and $w_{1,2}(x)$ by the formulae (\ref{3.1.1}) and (\ref{3.2.1}). Afterwards, construct the functions $q|_{[a,\frac{3a}{2}]\cup[\pi-\frac{a}{2},\pi]}$ and $p|_{[a,\frac{3a}{2}]\cup[\pi-\frac{a}{2},\pi]}$ by the formula (\ref{3.3.1}). Finally, construct the functions $q|_{(\frac{3a}{2},\pi-\frac{a}{2})}$ and $p|_{(\frac{3a}{2},\pi-\frac{a}{2})}$ by the formulae (\ref{3.6.1}) and (\ref{3.7.1}). Consider the corresponding problems $B_{1,1}(Q)$ and $B_{1,2}(Q)$. Then, as in Sections 2 and 3, one can show that $\Delta_{1,1}(\lambda)$ and $\Delta_{1,2}(\lambda)$ are their characteristic functions, respectively. $\square$

In view of the proof of Theorems \ref{th3.1}, we have the following algorithm for solving Inverse Problem 1.

\textbf{Algorithm 1.}  Let two spectra $\{\lambda_{n,1,1}\}_{n\in\mathbb{Z}}$ and $\{\lambda_{n,1,2}\}_{n\in\mathbb{Z}}$ be given.

(i) Construct the functions $\Delta_{1,1}(\lambda)$ and $\Delta_{1,2}(\lambda)$ by (\ref{1.4-1}) and (\ref{1.4.1-1});

(ii) In accordance with (\ref{2.11.1}) and (\ref{2.12.1}), find the functions $u_{1,1}(x)$ and $u_{1,2}(x)$  by the formulae
\begin{align}
u_{1,1}(x)=\frac{1}{2\pi}\sum_{n=-\infty}^{\infty}\Delta_{1,1}(n)\exp(-inx), \qquad\qquad\nonumber\\ u_{1,2}(x)=\frac{1}{2\pi}\sum_{n=-\infty}^{\infty}\Big(\Delta_{1,2}(n)-(-1)^{n}\Big)\exp(-inx);\!\!\!\!\!\!\nonumber
\end{align}

(iii) Construct the functions $w_{1,1}(x)$ and $w_{1,2}(x)$ by the formulae (\ref{3.1.1}) and (\ref{3.2.1});

(iv) Construct the functions $q|_{[a,\frac{3a}{2}]\cup[\pi-\frac{a}{2},\pi]}$ and $p|_{[a,\frac{3a}{2}]\cup[\pi-\frac{a}{2},\pi]}$ by the formula (\ref{3.3.1});

(v) Construct the functions $q|_{(\frac{3a}{2},\pi-\frac{a}{2})}$ and $p|_{(\frac{3a}{2},\pi-\frac{a}{2})}$ by the formulae (\ref{3.6.1}) and (\ref{3.7.1}).

\qquad

To prove Theorem \ref{th3}, we need the following lemma, which is similar to the proof of Lemma 3 in \cite{Buter2}.

\begin{lemma}\label{lem5.2}
For any complex sequences $\{\lambda_{n,2,1}\}_{n\in\mathbb{Z}}$ and $\{\lambda_{n,2,2}\}_{n\in\mathbb{Z}}$ of the form (\ref{1.3}) for $\nu=2$,
the functions $\Delta_{2,1}(\lambda)$ and $\Delta_{2,2}(\lambda)$ constructed by the formulae in (\ref{1.4}) and (\ref{1.4.1}) have the following forms:
\begin{align}\label{4.1.2}
  \Delta_{2,1}(\lambda)=\cos\lambda\pi+\int_{-\pi}^{\pi}u_{2,1}(x)\exp(i\lambda x)dx,\;\;
\end{align}
\begin{align}\label{4.2.2}
  \Delta_{2,2}(\lambda)=\sin\lambda\pi+\int_{-\pi}^{\pi}u_{2,2}(x)\exp(i\lambda x)dx\quad
\end{align}
for some functions $u_{2,1}(x)$ and $u_{2,2}(x)$ in $L_{2}(-\pi,\pi)$, respectively.
\end{lemma}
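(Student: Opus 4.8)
\textbf{Proof proposal for Lemma \ref{lem5.2}.}

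The plan is to mirror the argument used for Lemma \ref{lem5} (Lemma 3 in \cite{Buter2}), transplanting it to the pair $(\Delta_{2,1},\Delta_{2,2})$ whose leading terms are $\cos\lambda\pi$ and $\sin\lambda\pi$ rather than $-\sin\lambda\pi$ and $\cos\lambda\pi$. First I would invoke Lemma \ref{lem4} to record that the infinite products in (\ref{1.4}) and (\ref{1.4.1}), built from sequences of the form (\ref{1.3}) with $\nu=2$, converge and define entire functions; the convergence and the standard estimates for such canonical products are exactly those in \cite{Buter1}. The key point is to control the exponential type and the growth along the real axis: because $\lambda_{n,2,1}=n-\tfrac12+\kappa_{n,2,1}$ and $\lambda_{n,2,2}=n-1+\kappa_{n,2,2}$ with $\ell_2$ remainders, comparison of $\Delta_{2,1}(\lambda)$ with $\cos\lambda\pi=\prod_{n\in\mathbb{Z}}\frac{(n-1/2)-\lambda}{n-1/2}\exp(\frac{\lambda}{n-1/2})$ and of $\Delta_{2,2}(\lambda)$ with $\sin\lambda\pi=\pi\lambda\prod_{n\neq0}\frac{n-\lambda}{n}\exp(\frac{\lambda}{n})$ shows that $\Delta_{2,1}(\lambda)-\cos\lambda\pi$ and $\Delta_{2,2}(\lambda)-\sin\lambda\pi$ are entire of exponential type at most $\pi$.

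Next I would establish that these two differences lie in $L_2$ on the real line. This follows from the $\ell_2$ property of $\{\kappa_{n,2,j}\}$: writing $\Delta_{2,1}(\lambda)/\cos\lambda\pi-1$ and $\Delta_{2,2}(\lambda)/\sin\lambda\pi-1$ as sums over $n$ of terms controlled by $|\kappa_{n,2,j}|/|\lambda-(n-1/2)|$ (respectively $|\kappa_{n,2,j}|/|\lambda-n|$), one obtains, by the same Bessel-type estimates used in \cite{Buter1, Buter2}, that $\Delta_{2,1}(\lambda)-\cos\lambda\pi$ and $\Delta_{2,2}(\lambda)-\sin\lambda\pi$ restricted to $\mathbb{R}$ belong to $L_2(\mathbb{R})$. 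Having both exponential type $\le\pi$ and membership in $L_2(\mathbb{R})$, the Paley--Wiener theorem yields functions $u_{2,1},u_{2,2}\in L_2(-\pi,\pi)$ with
\begin{align}
\Delta_{2,1}(\lambda)-\cos\lambda\pi=\int_{-\pi}^{\pi}u_{2,1}(x)\exp(i\lambda x)\,dx,\qquad
\Delta_{2,2}(\lambda)-\sin\lambda\pi=\int_{-\pi}^{\pi}u_{2,2}(x)\exp(i\lambda x)\,dx,\nonumber
\end{align}
which is exactly (\ref{4.1.2})--(\ref{4.2.2}).

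The main obstacle I anticipate is the bookkeeping needed to pass from the canonical-product representations (\ref{1.4})--(\ref{1.4.1}) to the two quantitative facts (type $\le\pi$ and $L_2$-boundedness on $\mathbb{R}$) simultaneously and uniformly: one must handle the difference of two infinite products, not a single product, and the extra factor $\pi(\lambda-\lambda_{1,2,2})$ in (\ref{1.4.1}) shifts the normalization so that the comparison with $\sin\lambda\pi$ must be arranged around the zero at $n=0$ of $\sin\lambda\pi$ versus the zero at $\lambda_{1,2,2}$ of $\Delta_{2,2}$. This is routine given the machinery of \cite{Buter1} but requires care; since the statement asserts only existence of $u_{2,1},u_{2,2}\in L_2(-\pi,\pi)$, it suffices to cite the relevant general assertions in \cite{Buter1} and note that the present hypotheses are a special case, exactly as Lemma \ref{lem5} does for $\nu=1$. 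I would therefore conclude by remarking that the proof is identical to that of Lemma 3 in \cite{Buter2} with the obvious substitutions $-\sin\lambda\pi\mapsto\cos\lambda\pi$, $\cos\lambda\pi\mapsto\sin\lambda\pi$ and the corresponding reindexing of the zeros.
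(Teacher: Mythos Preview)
Your proposal is correct and matches the paper's approach. The paper does not give a separate proof for this lemma but simply states that the argument is similar to that of Lemma~3 in \cite{Buter2}; your Paley--Wiener outline (exponential type $\le\pi$ together with $L_2$-boundedness on $\mathbb{R}$ for the differences) and your closing remark about the obvious substitutions $-\sin\lambda\pi\mapsto\cos\lambda\pi$, $\cos\lambda\pi\mapsto\sin\lambda\pi$ with the corresponding reindexing of zeros are exactly what that reference amounts to.
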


Next, we give the proof of Theorem \ref{th3}.

\textbf{Proof of Theorem \ref{th3}.} For the necessity, the asymptotic (\ref{1.3}) was already established in Theorem \ref{th1} for $\nu=2$. According to Lemma \ref{lem4}, the characteristic functions $\Delta_{2,1}(\lambda)$ and $\Delta_{2,2}(\lambda)$ have the representations (\ref{1.4}) and (\ref{1.4.1}). Thus, condition (ii) easily follows from representations (\ref{2.11}) and (\ref{2.12}).

For the sufficiency, we construct the functions  $\Delta_{2,1}(\lambda)$ and $\Delta_{2,2}(\lambda)$ by the representations (\ref{1.4}) and (\ref{1.4.1}) using the given sequences $\{\lambda_{n,2,1}\}_{n\in\mathbb{Z}}$ and $\{\lambda_{n,2,2}\}_{n\in\mathbb{Z}}$. According to Lemma \ref{lem5.2},  these functions have the forms (\ref{4.1.2}) and (\ref{4.2.2}) with some functions $u_{2,1}(x)$, $u_{2,2}(x)\in L_{2}(-\pi,\pi)$, respectively. Further, condition (ii) along with the Paley-Wiener theorem implies $u_{2,1}(x)=0$ and $u_{2,2}(x)=0$ a.e. on $(-\pi, a-\pi)\cup(\pi-a,\pi)$, that is, representations (\ref{2.11}) and  (\ref{2.12}) hold. First, construct the functions $w_{2,1}(x)$ and $w_{2,2}(x)$ by the formulae (\ref{3.1}) and (\ref{3.2}). Afterwards, construct the functions $q|_{[a,\frac{3a}{2}]\cup[\pi-\frac{a}{2},\pi]}$ and $p|_{[a,\frac{3a}{2}]\cup[\pi-\frac{a}{2},\pi]}$ by the formula (\ref{3.3}). Finally, construct the functions $q|_{(\frac{3a}{2},\pi-\frac{a}{2})}$ and $p|_{(\frac{3a}{2},\pi-\frac{a}{2})}$ by the formulae (\ref{3.6}) and (\ref{3.7}). Consider the corresponding problems $B_{2,1}(Q)$ and $B_{2,2}(Q)$. Then, as in Sections 2 and 3, one can show that $\Delta_{2,1}(\lambda)$ and $\Delta_{2,2}(\lambda)$ are their characteristic functions, respectively. $\square$

In view of the proof of Theorems \ref{th3}, we have the following algorithm for solving Inverse Problem 2.

\textbf{Algorithm 2.}  Let two spectra $\{\lambda_{n,2,1}\}_{n\in\mathbb{Z}}$ and $\{\lambda_{n,2,2}\}_{n\in\mathbb{Z}}$ be given.

(i) Construct the functions $\Delta_{2,1}(\lambda)$ and $\Delta_{2,2}(\lambda)$ by (\ref{1.4}) and (\ref{1.4.1});

(ii) In accordance with (\ref{2.11}) and (\ref{2.12}), find the functions $u_{2,1}(x)$ and $u_{2,2}(x)$  by the formulae
\begin{align}
 u_{2,1}(x)=\frac{1}{2\pi}\sum_{n=-\infty}^{\infty}\Big(\Delta_{2,1}(n)-(-1)^{n}\Big)\exp(-inx),\!\!\!\!\!\!\nonumber\\ u_{2,2}(x)=\frac{1}{2\pi}\sum_{n=-\infty}^{\infty}\Delta_{2,2}(n)\exp(-inx);\qquad\qquad\nonumber
\end{align}

(iii) Construct the functions $w_{2,1}(x)$ and $w_{2,2}(x)$ by the formulae (\ref{3.1}) and (\ref{3.2});

(iv) Construct the functions $q|_{[a,\frac{3a}{2}]\cup[\pi-\frac{a}{2},\pi]}$ and $p|_{[a,\frac{3a}{2}]\cup[\pi-\frac{a}{2},\pi]}$ by the formula (\ref{3.3});

(v) Construct the functions $q|_{(\frac{3a}{2},\pi-\frac{a}{2})}$ and $p|_{(\frac{3a}{2},\pi-\frac{a}{2})}$ by the formulae (\ref{3.6}) and (\ref{3.7}).

\section{stability of inverse problem}

Let us formulate the uniform stability for Inverse Problems 1 and 2, respectively. We use the symbol $C_{r}$ for denoting the constant depended only on $r$.

\begin{theorem}\label{th4}
For any fixed $r\in(0,\frac{1}{2})$, there exists $C_{r}>0$ such that the estimate
\begin{align}\label{5.1}
  \|q-\widetilde{q}\|_{L_{2}(a,\pi)}+\|p-\widetilde{p}\|_{L_{2}(a,\pi)}   \qquad\qquad\qquad\qquad\qquad\nonumber\\
  \leq C_{r}\Big(\|\{\lambda_{n,1,1}\!-\!\widetilde{\lambda}_{n,1,1}\}_{n\in\mathbb{Z}}\|_{l_{2}}\!+\!
  \|\{\lambda_{n,1,2}\!-\!\widetilde{\lambda}_{n,1,2}\}_{n\in\mathbb{Z}}\|_{l_{2}} \Big)\!\!\!
\end{align}
is fulfilled as soon as $\|\{\lambda_{n,1,j}\!-n+\frac{j-1}{2}\}_{n\in\mathbb{Z}}\|_{l_{2}}\!\leq\! r$ and $\|\{\widetilde{\lambda}_{n,1,j}\!-n+\frac{j-1}{2}\}_{n\in\mathbb{Z}}\|_{l_{2}}\!\leq\! r$ for $j=1,2$.
\end{theorem}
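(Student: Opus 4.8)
The plan is to run Algorithm~1 from Section~4 line by line, regarding each of its steps as a map that is Lipschitz continuous --- uniformly over the ball of radius $r$ --- between the relevant normed spaces, and then to compose the resulting Lipschitz estimates. Throughout, $C_r$ denotes a constant depending only on $r$, possibly changing from line to line.

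\emph{Step A: from the two spectra to the characteristic functions sampled at integers.} Set $\kappa_{n,1,j}:=\lambda_{n,1,j}-n+\frac{j-1}{2}$ and $\tilde\kappa_{n,1,j}$ analogously, so that the hypotheses read $\|\{\kappa_{n,1,j}\}_{n}\|_{l_2},\ \|\{\tilde\kappa_{n,1,j}\}_{n}\|_{l_2}\le r<\tfrac12$, while $\lambda_{n,1,j}-\tilde\lambda_{n,1,j}=\kappa_{n,1,j}-\tilde\kappa_{n,1,j}$. Starting from the canonical products (\ref{1.4-1}), (\ref{1.4.1-1}) and the classical factorizations $\sin\lambda\pi=\pi\lambda\prod_{n\ne0}(1-\tfrac{\lambda}{n})e^{\lambda/n}$ and $\cos\lambda\pi=\prod_{n\in\mathbb{Z}}(1-\tfrac{\lambda}{n-1/2})e^{\lambda/(n-1/2)}$, I would extract, for each integer $m$, the closed forms
\[
\Delta_{1,1}(m)=\pi(-1)^{m}\kappa_{m,1,1}\Big(1-\tfrac{\kappa_{0,1,1}}{m}\Big)\!\!\prod_{\substack{n\ne 0\\ n\ne m}}\!\!\Big(1+\tfrac{\kappa_{n,1,1}}{n-m}\Big)\quad(m\ne0),
\]
\[
\Delta_{1,2}(m)=(-1)^{m}\prod_{n\in\mathbb{Z}}\Big(1+\tfrac{\kappa_{n,1,2}}{\,n-\frac12-m\,}\Big),
\]
with the obvious modification $\Delta_{1,1}(0)=\pi\kappa_{0,1,1}\prod_{n\ne0}(1+\tfrac{\kappa_{n,1,1}}{n})$. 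Since $r<\tfrac12$, each infinite product has modulus bounded by $C_r$ uniformly in $m$ and in the data, because its logarithm is dominated by $C\sum_{n}\frac{|\kappa_{n,1,j}|}{|n-m|}\le C'r$ via Cauchy--Schwarz. Subtracting the tilde versions, using $aP-b\tilde P=(a-b)P+b(P-\tilde P)$ and the standard bound $|\prod(1+a_{n})-\prod(1+b_{n})|\le\big(\prod\max(|1+a_{n}|,|1+b_{n}|)\big)\sum|a_{n}-b_{n}|$, one gets for $j=1,2$
\[
|\Delta_{1,j}(m)-\tilde\Delta_{1,j}(m)|\le C_{r}\Big(|\kappa_{m,1,j}-\tilde\kappa_{m,1,j}|+\sum_{n\ne m}\frac{|\kappa_{n,1,j}-\tilde\kappa_{n,1,j}|}{|n-m|}\Big).
\]
Squaring, summing over $m$, and invoking the discrete Hilbert inequality $\sum_{m}\big(\sum_{n\ne m}\frac{|c_{n}|}{|n-m|}\big)^{2}\le C\sum_{n}|c_{n}|^{2}$ yields $\sum_{m\in\mathbb{Z}}|\Delta_{1,j}(m)-\tilde\Delta_{1,j}(m)|^{2}\le C_{r}\,\|\{\lambda_{n,1,j}-\tilde\lambda_{n,1,j}\}_{n}\|_{l_2}^{2}$.

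\emph{Step B: from the sampled characteristic functions to $q$ and $p$.} By Algorithm~1(ii), $\{\Delta_{1,1}(n)\}_{n}$ and $\{\Delta_{1,2}(n)-(-1)^{n}\}_{n}$ are the Fourier coefficients of $u_{1,1}$ and $u_{1,2}$ on $(-\pi,\pi)$, so Parseval together with Step~A gives $\|u_{1,j}-\tilde u_{1,j}\|_{L_{2}(a-\pi,\pi-a)}\le C_{r}\|\{\lambda_{n,1,j}-\tilde\lambda_{n,1,j}\}\|_{l_2}$, and the choice $\tilde u=0$ gives the a~priori bounds $\|u_{1,j}\|_{L_{2}},\|\tilde u_{1,j}\|_{L_{2}}\le C_{r}$. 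Because $w_{1,1},w_{1,2}$ depend linearly on $u_{1,1},u_{1,2}$ through (\ref{3.1.1})--(\ref{3.2.1}), the estimates (\ref{3.2-1}) propagate these to $\|w_{1,j}-\tilde w_{1,j}\|_{L_{2}(a,\pi)}\le C_{r}\big(\|\{\lambda_{n,1,1}-\tilde\lambda_{n,1,1}\}\|_{l_2}+\|\{\lambda_{n,1,2}-\tilde\lambda_{n,1,2}\}\|_{l_2}\big)$ and $\|w_{1,j}\|_{L_{2}(a,\pi)},\|\tilde w_{1,j}\|_{L_{2}(a,\pi)}\le C_{r}$. On $[a,\frac{3a}{2}]\cup[\pi-\frac a2,\pi]$ Lemma~\ref{lem2.1} identifies $q-\tilde q$ with $w_{1,1}-\tilde w_{1,1}$ and $p-\tilde p$ with $w_{1,2}-\tilde w_{1,2}$, so the bound holds there. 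On $(\frac{3a}{2},\pi-\frac a2)$ Lemma~\ref{lem3.1} gives $q-\tilde q=(w_{1,1}-\tilde w_{1,1})+(\gamma_{1,1}-\tilde\gamma_{1,1})$, and by (\ref{3.7.1}) the difference $\gamma_{1,1}-\tilde\gamma_{1,1}$ is a finite sum of correlation integrals, each bilinear in $w$ and linear in $w-\tilde w$; bounding them by Cauchy--Schwarz (equivalently, Young's inequality on a finite interval) and using the uniform bounds on $\|w_{1,j}\|,\|\tilde w_{1,j}\|$ gives $\|\gamma_{1,1}-\tilde\gamma_{1,1}\|_{L_{2}(\frac{3a}{2},\pi-\frac a2)}\le C_{r}\big(\|w_{1,1}-\tilde w_{1,1}\|_{L_{2}(a,\pi)}+\|w_{1,2}-\tilde w_{1,2}\|_{L_{2}(a,\pi)}\big)$, and similarly for $\gamma_{1,2}$. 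Adding the contributions of the two subintervals and recalling that $q,p,\tilde q,\tilde p$ vanish on $(0,a)$ produces (\ref{5.1}).

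\emph{Main obstacle.} The passage after Step~A is routine propagation through linear and bounded bilinear maps; the substantive point is Step~A, the $l_2$-Lipschitz bound for the map $\{\lambda_{n,1,j}\}\mapsto\{\Delta_{1,j}(n)\}$. Two ingredients there need care: the uniform (in $m$ and in the data, depending on $r$ only) control of the canonical-product factors --- this is precisely where $r<\tfrac12$ enters, keeping the zeros separated from the ``wrong'' integers --- and the passage from the $\frac1{|n-m|}$-weighted sums to genuine $l_2$ estimates, which cannot go through Young's convolution inequality (the kernel is not summable) and instead relies on the discrete Hilbert inequality. The analogous stability statement for Inverse Problem~2 would be proved word for word, using Lemmas~\ref{lem2}, \ref{lem3} and the products (\ref{1.4}), (\ref{1.4.1}) in place of their Problem~1 counterparts, and the fact that the unperturbed pair is now $(\cos\lambda\pi,\sin\lambda\pi)$ rather than $(-\sin\lambda\pi,\cos\lambda\pi)$.
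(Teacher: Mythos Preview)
Your proposal is correct and follows the same overall route as the paper: both run Algorithm~1, propagating estimates through the chain $\{\lambda_{n,1,j}\}\to u_{1,j}\to w_{1,j}\to (q,p)$ using Lemmas~\ref{lem2.1}--\ref{lem3.1} and the linear bounds~(\ref{3.2-1}).

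The genuine difference lies in your Step~A. The paper does not prove the passage from the spectra to $u_{1,j}$ at all; it simply invokes Lemma~\ref{lem6}, which is quoted as an immediate consequence of Theorem~7 in \cite{Buter1}. You instead supply a self-contained argument: evaluating the canonical products (\ref{1.4-1})--(\ref{1.4.1-1}) at integers, controlling the infinite products uniformly via $\sum_{n\ne m}|\kappa_n|/|n-m|\le C\|\kappa\|_{l_2}$, telescoping the difference of products, and closing with the discrete Hilbert inequality. This is precisely the substance hidden behind the citation, and your identification of the role of $r<\tfrac12$ is accurate (it is needed for $\Delta_{1,2}$, where the denominators $n-\tfrac12-m$ can be as small as $\tfrac12$). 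What you gain is a proof that does not leave the paper; what the paper gains is brevity. A secondary, minor difference: for the bilinear $\gamma$-terms you argue Lipschitz-on-bounded-sets directly, whereas the paper first records the polynomial a~priori bound~(\ref{5.3}), passes (somewhat elliptically) to the analogous estimate for differences, and only then uses $r<\tfrac12$ to absorb the quadratic terms via $\|\cdot\|_{l_2}^2\le\|\cdot\|_{l_2}$. The two arguments are equivalent, yours being the cleaner formulation.
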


\begin{theorem}\label{th4.2}
For any fixed $r\in(0,\frac{1}{2})$, there exists $C_{r}>0$ such that the estimate
\begin{align}\label{5.2}
  \|q-\widetilde{q}\|_{L_{2}(a,\pi)}+\|p-\widetilde{p}\|_{L_{2}(a,\pi)}   \qquad\qquad\qquad\qquad\qquad\nonumber\\
  \leq C_{r}\Big(\|\{\lambda_{n,2,1}\!-\!\widetilde{\lambda}_{n,2,1}\}_{n\in\mathbb{Z}}\|_{l_{2}}\!+\!
  \|\{\lambda_{n,2,2}\!-\!\widetilde{\lambda}_{n,2,2}\}_{n\in\mathbb{Z}}\|_{l_{2}} \Big)\!\!\!
\end{align}
is fulfilled as soon as $\|\{\lambda_{n,2,j}\!-n+\frac{j}{2}\}_{n\in\mathbb{Z}}\|_{l_{2}}\!\leq\! r$ and $\|\{\widetilde{\lambda}_{n,2,j}\!-n+\frac{j}{2}\}_{n\in\mathbb{Z}}\|_{l_{2}}\!\leq\! r$ for $j=1,2$.
\end{theorem}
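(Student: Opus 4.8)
\textbf{Proof proposal for Theorem \ref{th4.2}.} The plan is to follow the structure of Algorithm 2 and track how each constructive step propagates an $l_2$-perturbation of the two spectra into an $L_2$-perturbation of the recovered potentials, keeping all constants dependent only on $r$. First I would pass from the spectra to the characteristic functions: using the product representations (\ref{1.4}) and (\ref{1.4.1}) for $\Delta_{2,1},\Delta_{2,2}$ and their tildes, together with the hypothesis that the perturbations $\{\lambda_{n,2,j}-(n-\tfrac{j}{2})\}$ and $\{\widetilde\lambda_{n,2,j}-(n-\tfrac{j}{2})\}$ lie in a ball of radius $r<\tfrac12$, one obtains an estimate of the form
\begin{align}
\|\{\Delta_{2,j}(n)-\widetilde\Delta_{2,j}(n)\}_{n\in\mathbb{Z}}\|_{l_2}
\le C_r\,\|\{\lambda_{n,2,j}-\widetilde\lambda_{n,2,j}\}_{n\in\mathbb{Z}}\|_{l_2},\qquad j=1,2.\nonumber
\end{align}
This is the standard lemma on stability of canonical products under perturbation of their zeros restricted to a ball of finite radius (as used in \cite{Buter2} for the $\nu=1$ case); the condition $r<\tfrac12$ guarantees that zeros of $\Delta_{2,j}$ and $\widetilde\Delta_{2,j}$ stay separated and that no small denominators appear.

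Next I would transfer this to the functions $u_{2,1},u_{2,2}$. Since $\Delta_{2,1}(\lambda)-\cos\lambda\pi$ and $\Delta_{2,2}(\lambda)-\sin\lambda\pi$ are the Fourier transforms of $u_{2,1},u_{2,2}\in L_2(a-\pi,\pi-a)$ supported in $[a-\pi,\pi-a]$, the reconstruction formula in step (ii) of Algorithm 2 expresses $u_{2,j}$ as the $L_2$-convergent Fourier series of the sampled values $\Delta_{2,1}(n)-(-1)^n$, resp.\ $\Delta_{2,2}(n)$. By the Parseval-type bound for the sampling operator (or directly, since these are Fourier coefficients on an interval of length $2\pi$), this gives
\begin{align}
\|u_{2,j}-\widetilde u_{2,j}\|_{L_2(a-\pi,\pi-a)}
\le C_r\,\|\{\Delta_{2,j}(n)-\widetilde\Delta_{2,j}(n)\}_{n\in\mathbb{Z}}\|_{l_2},\qquad j=1,2.\nonumber
\end{align}
Combining with the previous step yields $\|u_{2,j}-\widetilde u_{2,j}\|_{L_2}\le C_r(\|\{\lambda_{n,2,1}-\widetilde\lambda_{n,2,1}\}\|_{l_2}+\|\{\lambda_{n,2,2}-\widetilde\lambda_{n,2,2}\}\|_{l_2})$. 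Then the linear change of variables (\ref{3.1})--(\ref{3.2}), together with the explicit estimate (\ref{3.2-2}), immediately gives the same type of bound for $\|w_{2,1}-\widetilde w_{2,1}\|_{L_2(a,\pi)}$ and $\|w_{2,2}-\widetilde w_{2,2}\|_{L_2(a,\pi)}$; call this common right-hand side $C_r\,\delta$, where $\delta$ denotes the sum of the two $l_2$-norms of the spectral differences.

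It remains to pass from the $w_{2,j}$ to the potentials via Lemmas \ref{lem2} and \ref{lem3}. On the ``outer'' set $[a,\tfrac{3a}{2}]\cup[\pi-\tfrac a2,\pi]$ the relation (\ref{3.3}) is a plain identity, so $\|q-\widetilde q\|$ and $\|p-\widetilde p\|$ restricted there are bounded by $\|w_{2,1}-\widetilde w_{2,1}\|+\|w_{2,2}-\widetilde w_{2,2}\|\le C_r\delta$. On the ``inner'' interval $(\tfrac{3a}{2},\pi-\tfrac a2)$ one uses (\ref{3.6})--(\ref{3.7}); the correction terms $\gamma_{2,j}$ are quadratic convolutions of the $w_{2,k}$, so I would write $\gamma_{2,j}-\widetilde\gamma_{2,j}$ as a sum of two terms in each of which one factor is a difference $w_{2,k}-\widetilde w_{2,k}$ and the other is $w_{2,l}$ or $\widetilde w_{2,l}$, then bound the convolution in $L_2(a,\pi)$ by (a constant times) $\|w_{2,k}-\widetilde w_{2,k}\|_{L_2(a,\pi)}$ times $\max(\|w_{2,l}\|_{L_2(a,\pi)},\|\widetilde w_{2,l}\|_{L_2(a,\pi)})$ — here I use that the integration is over a bounded interval so the convolution of two $L_2$ functions is in $L_2$ with a uniform norm bound. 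Since $r<\tfrac12$ forces, via (\ref{3.2-2}) and the a priori bound on $u_{2,j}$ coming from the ball hypothesis, a uniform bound $\|w_{2,l}\|_{L_2(a,\pi)},\|\widetilde w_{2,l}\|_{L_2(a,\pi)}\le C_r$, we get $\|\gamma_{2,j}-\widetilde\gamma_{2,j}\|_{L_2(a,\pi)}\le C_r\delta$, hence the same bound for $q-\widetilde q$ and $p-\widetilde p$ on the inner interval. Adding the outer and inner contributions gives (\ref{5.2}). The proof of Theorem \ref{th4} is identical word for word after replacing every subscript $2$ by $1$, using (\ref{3.1.1})--(\ref{3.2.1}), (\ref{3.2-1}), Lemmas \ref{lem2.1}, \ref{lem3.1}, \ref{lem4.1} and the product formulas (\ref{1.4-1}), (\ref{1.4.1-1}) in place of their $\nu=2$ analogues.

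I expect the main obstacle to be the first step: making precise and uniform (in the radius-$r$ ball) the estimate that controls $\{\Delta_{2,j}(n)-\widetilde\Delta_{2,j}(n)\}_{n\in\mathbb{Z}}$ in $l_2$ by the $l_2$-distance between the zero sets. One must compare the two infinite canonical products term by term, isolate the factor $\tfrac{\lambda_{n,2,j}-\lambda}{\widetilde\lambda_{n,2,j}-\lambda}$ at each $\lambda=m$, take logarithms and sum; the bound $r<\tfrac12$ is exactly what is needed to keep these logarithms and the resulting series convergent with a constant depending only on $r$. This is technical but standard (it is the Dirac-operator analogue of Lemma 4 / the stability lemma in \cite{Buter2}), and once it is in hand the remaining steps are the routine propagation-of-estimates bookkeeping sketched above.
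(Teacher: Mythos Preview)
Your proposal is correct and follows essentially the same route as the paper: the paper packages your steps 1--2 (spectra $\to \{\Delta_{2,j}(n)\} \to u_{2,j}$) into a single black-box citation, Lemma~\ref{lem6} (Theorem~7 of \cite{Buter1}), and then propagates through $w_{2,j}$ via (\ref{3.2-2}) and through Lemmas~\ref{lem2}--\ref{lem3} exactly as you outline. Your bilinear decomposition of $\gamma_{2,j}-\widetilde\gamma_{2,j}$, combined with the a~priori bound $\|w_{2,l}\|,\|\widetilde w_{2,l}\|\le C_r$, is in fact cleaner than the paper's somewhat elliptical passage from the bound (\ref{5.3.2}) on $\|q\|+\|p\|$ to the difference estimate (\ref{5.5.2}).
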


To prove Theorems \ref{th4} and \ref{th4.2}, we need the following lemma, which immediately follows from Theorem 7 in \cite{Buter1}.

\begin{lemma}\label{lem6}
Let $\nu,j\in\{1,2\}$. For any fixed $r>0$, there exists $C_{r}>0$ such that the estimate
\begin{align}
  \|u_{\nu,j}-\widetilde{u}_{\nu,j}\|_{L_{2}(a-\pi,\pi-a)}\leq C_{r}\|\{\lambda_{n,\nu,,j}-\widetilde{\lambda}_{n,\nu,j}\}_{n\in\mathbb{Z}}\|_{l_{2}} \nonumber
\end{align}
is fulfilled as soon as $\|\{\lambda_{n,\nu,j}\!-n-\frac{2-\nu-j}{2}\}_{n\in\mathbb{Z}}\|_{l_{2}}\!\leq\! r$ and $\|\{\widetilde{\lambda}_{n,\nu,j}\!-n-\frac{2-\nu-j}{2}\}_{n\in\mathbb{Z}}\|_{l_{2}}\!\leq\! r$.
\end{lemma}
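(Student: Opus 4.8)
The plan is to reduce the claimed $L_{2}$-stability of $u_{\nu,j}$ to an $l_{2}$-stability of the values of the characteristic functions at the integers, and then to control the latter by the eigenvalue differences through the product representations. First I would exploit the Fourier structure already exhibited in the representations (\ref{2.11.1}), (\ref{2.12.1}), (\ref{2.11}) and (\ref{2.12}): each $u_{\nu,j}$ lies in $L_{2}(a-\pi,\pi-a)$ and, extended by zero, in $L_{2}(-\pi,\pi)$, where $\{(2\pi)^{-1/2}\exp(inx)\}_{n\in\mathbb{Z}}$ is an orthonormal basis. Evaluating a representation $\Delta_{\nu,j}(\lambda)=\Delta_{\nu,j}^{0}(\lambda)+\int_{a-\pi}^{\pi-a}u_{\nu,j}(x)\exp(i\lambda x)\,dx$, with trigonometric free part $\Delta_{\nu,j}^{0}\in\{\pm\sin\lambda\pi,\pm\cos\lambda\pi\}$, at $\lambda=n$ removes that free term (whose value is $0$ or $(-1)^{n}$) and produces exactly the $n$-th Fourier coefficient of $u_{\nu,j}$; this is precisely what underlies the inversion formulae used in the reconstruction algorithms. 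Since $\Delta_{\nu,j}$ and $\widetilde{\Delta}_{\nu,j}$ share the same free part, it cancels in the difference, so Parseval's identity yields
\begin{align}
\|u_{\nu,j}-\widetilde{u}_{\nu,j}\|_{L_{2}(a-\pi,\pi-a)}^{2}=\frac{1}{2\pi}\sum_{n\in\mathbb{Z}}\big|\Delta_{\nu,j}(n)-\widetilde{\Delta}_{\nu,j}(n)\big|^{2}.\nonumber
\end{align}

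Second I would invoke the Hadamard-type product representations of Lemmas \ref{lem4} and \ref{lem4.1}, which express $\Delta_{\nu,j}$ and $\widetilde{\Delta}_{\nu,j}$ entirely through their zeros $\{\lambda_{n,\nu,j}\}_{n\in\mathbb{Z}}$ and $\{\widetilde{\lambda}_{n,\nu,j}\}_{n\in\mathbb{Z}}$. The asymptotics (\ref{1.3}) together with the ball constraints $\|\{\lambda_{n,\nu,j}-n-\frac{2-\nu-j}{2}\}_{n\in\mathbb{Z}}\|_{l_{2}}\leq r$ guarantee that all zeros stay uniformly close to $n+\frac{2-\nu-j}{2}$ and uniformly separated, which is exactly the regime in which the infinite products converge with bounds depending only on $r$.

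Third, and this is the technical core, I would estimate $\Delta_{\nu,j}(n)-\widetilde{\Delta}_{\nu,j}(n)$ at each integer $n$ in terms of the zero differences. Writing out the two products and telescoping factor by factor, each factor contributes at most $|\lambda_{m,\nu,j}-\widetilde{\lambda}_{m,\nu,j}|$ times a multiplier that, by the uniform separation from the second step, is square-summable in $m$ with a bound depending only on $r$; a Cauchy--Schwarz step then converts the sum over $m$ into $\|\{\lambda_{m,\nu,j}-\widetilde{\lambda}_{m,\nu,j}\}_{m\in\mathbb{Z}}\|_{l_{2}}$, uniformly in $n$, and a second summation over $n$ combines with the Parseval identity above to give the desired $l_{2}\to L_{2}$ Lipschitz estimate with constant $C_{r}$. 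The main obstacle is precisely this uniform control: one needs $|\Delta_{\nu,j}(n)-\widetilde{\Delta}_{\nu,j}(n)|$ to be square-summable in $n$, not merely bounded pointwise, so that the Parseval reduction can be closed. This uniform stability for entire functions of exponential type $\pi$ perturbed by an $L_{2}$ Fourier integral is exactly the content of Theorem 7 in \cite{Buter1}; applying it separately to each pair $(\nu,j)\in\{1,2\}^{2}$ delivers the lemma.
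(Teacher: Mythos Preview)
Your proposal is correct and aligns with the paper's approach: the paper gives no proof at all beyond stating that the lemma ``immediately follows from Theorem~7 in \cite{Buter1}.'' Your Parseval reduction and telescoping sketch are sound motivation, but ultimately you also land on invoking Theorem~7 in \cite{Buter1}, which is exactly what the paper does.
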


Now we are in position to give the proof of Theorem \ref{th4}.

\textbf{Proof of Theorem \ref{th4}.} According Lemma \ref{lem3.1}, we have
\begin{equation}
  |\gamma_{1,1}(x)|\leq 2\|w_{1,1}\|_{L_{2}(a,\pi)}\|w_{1,2}\|_{L_{2}(a,\pi)}, \;\;x\in(\frac{3a}{2},\pi-\frac{a}{2}),\nonumber
\end{equation}
then
\begin{align}
  \int_{\frac{3a}{2}}^{\pi-\frac{a}{2}}|q(x)|^{2}dx=\int_{\frac{3a}{2}}^{\pi-\frac{a}{2}}|w_{1,1}(x)+\gamma_{1,1}(x)|^{2}dx \quad\:\nonumber\\
  \leq 2\int_{\frac{3a}{2}}^{\pi-\frac{a}{2}}|w_{1,1}(x)|^{2}dx+2\int_{\frac{3a}{2}}^{\pi-\frac{a}{2}}|\gamma_{1,1}(x)|^{2}dx \qquad\nonumber\\
  \leq 2\int_{\frac{3a}{2}}^{\pi-\frac{a}{2}}|w_{1,1}(x)|^{2}dx+8\pi\|w_{1,1}\|_{L_{2}(a,\pi)}^{2}\|w_{1,2}\|_{L_{2}(a,\pi)}^{2}.\!\!\!\!\!\!\!\!\!\!
  \nonumber
\end{align}
This along with Lemma \ref{lem2.1} yield
\begin{align}
  \int_{a}^{\pi}|q(x)|^{2}dx=\int_{a}^{\frac{3a}{2}}|q(x)|^{2}dx+\int_{\frac{3a}{2}}^{\pi-\frac{a}{2}}|q(x)|^{2}dx
  +\int_{\pi-\frac{a}{2}}^{\pi}|q(x)|^{2}dx \!\nonumber\\
  \leq\int_{a}^{\pi}|w_{1,1}(x)|^{2}dx+8\pi\|w_{1,1}\|_{L_{2}(a,\pi)}^{2}\|w_{1,2}\|_{L_{2}(a,\pi)}^{2}\quad\;\;\nonumber\\
  \leq8\pi\|w_{1,1}\|_{L_{2}(a,\pi)}^{2}(1+\|w_{1,2}\|_{L_{2}(a,\pi)}^{2}),\qquad\qquad\qquad\quad\!\nonumber
\end{align}
which implies
\begin{align}\label{5.1}
  \|q\|_{L_{2}(a,\pi)}\leq 2\sqrt{2\pi}\|w_{1,1}\|_{L_{2}(a,\pi)}\sqrt{1+\|w_{1,2}\|_{L_{2}(a,\pi)}^{2}}\qquad\qquad\qquad\!\!\nonumber\\
  \leq 2\sqrt{2\pi}\|w_{1,1}\|_{L_{2}(a,\pi)}(1+\|w_{1,2}\|_{L_{2}(a,\pi)})\qquad\qquad\qquad\!\nonumber\\
  \leq 2\sqrt{2\pi}(\|w_{1,1}\|_{L_{2}(a,\pi)}+\|w_{1,1}\|_{L_{2}(a,\pi)}^{2}+\|w_{1,2}\|_{L_{2}(a,\pi)}^{2}).\!\!
\end{align}
Similarly, we get
\begin{align}\label{5.2}
   \|p\|_{L_{2}(a,\pi)}\!\leq 2\sqrt{2\pi}(\|w_{1,2}\|_{L_{2}(a,\pi)}\!+\!\|w_{1,1}\|_{L_{2}(a,\pi)}^{2}\!+\!\|w_{1,2}\|_{L_{2}(a,\pi)}^{2}).
\end{align}

Using the estimates (\ref{3.2-1}), (\ref{5.1}) and (\ref{5.2}), we obtain

\begin{align}\label{5.3}
   \|q\|_{L_{2}(a,\pi)}\!+\!\|p\|_{L_{2}(a,\pi)}\!\leq \!16\pi\Big(\|u_{1,1}\|_{L_{2}(a-\pi,\pi-a)}\!+\!\|u_{1,2}\|_{L_{2}(a-\pi,\pi-a)} \!\!\!\! \nonumber\\
   \!+\|u_{1,1}\|_{L_{2}(a-\pi,\pi-a)}^{2}\!+\!\|u_{1,2}\|_{L_{2}(a-\pi,\pi-a)}^{2}\Big).
\end{align}
According Lemma \ref{lem6}, for any fixed $r\in(0,\frac{1}{2})$, there exists $C_{r}>0$ such that the estimate
\begin{align}\label{5.4}
  \|u_{1,j}-\widetilde{u}_{1,j}\|_{L_{2}(a-\pi,\pi-a)}\leq C_{r}\|\{\lambda_{n,1,j}-\widetilde{\lambda}_{n,1,j}\}_{n\in\mathbb{Z}}\|_{l_{2}}
\end{align}
is fulfilled as soon as $\|\{\lambda_{n,1,j}\!-n+\frac{j-1}{2}\}_{n\in\mathbb{Z}}\|_{l_{2}}\!\leq\! r$ and $\|\{\widetilde{\lambda}_{n,1,j}\!-n+\frac{j-1}{2}\}_{n\in\mathbb{Z}}\|_{l_{2}}\!\leq\! r$.

Next, we use one and the same symbol $C_{r}$ for denoting different positive constant depended only on $r$. Using the estimates (\ref{5.3}) and
(\ref{5.4}), we obtain
\begin{align}\label{5.5}
 \|q-\widetilde{q}\|_{L_{2}(a,\pi)}\!+\!\|p-\widetilde{p}\|_{L_{2}(a,\pi)}\qquad\qquad\qquad\qquad\quad\qquad\! \nonumber\\
 \leq \!C_{r}\Big(\|u_{1,1}-\widetilde{u}_{1,1}\|_{L_{2}(a-\pi,\pi-a)}\!+\!\|u_{1,2}-\widetilde{u}_{1,2}\|_{L_{2}(a-\pi,\pi-a)}\nonumber\\
 \!+\|u_{1,1}-\widetilde{u}_{1,1}\|_{L_{2}(a-\pi,\pi-a)}^{2}\!+\!\|u_{1,2}-\widetilde{u}_{1,2}\|_{L_{2}(a-\pi,\pi-a)}^{2}\Big)\!\!\!\!\!\nonumber\\
 \leq \!C_{r}\Big(\|\{\lambda_{n,1,1}-\widetilde{\lambda}_{n,1,1}\}_{n\in\mathbb{Z}}\|_{l_{2}}\!
 +\!\|\{\lambda_{n,1,2}-\widetilde{\lambda}_{n,1,2}\}_{n\in\mathbb{Z}}\|_{l_{2}} \, \nonumber\\
 \!+\|\{\lambda_{n,1,1}-\widetilde{\lambda}_{n,1,1}\}_{n\in\mathbb{Z}}\|_{l_{2}}^{2}\!
 +\!\|\{\lambda_{n,1,2}-\widetilde{\lambda}_{n,1,2}\}_{n\in\mathbb{Z}}\|_{l_{2}}^{2}\Big)\!\!\!
\end{align}
is fulfilled as soon as $\|\{\lambda_{n,1,j}\!-n+\frac{j-1}{2}\}_{n\in\mathbb{Z}}\|_{l_{2}}\!\leq\! r$ and $\|\{\widetilde{\lambda}_{n,1,j}\!-n+\frac{j-1}{2}\}_{n\in\mathbb{Z}}\|_{l_{2}}\!\leq\! r$.

Since $\|\{\lambda_{n,1,j}\!-n+\frac{j-1}{2}\}_{n\in\mathbb{Z}}\|_{l_{2}}\!\leq\! r<\frac{1}{2}$ and $\|\{\widetilde{\lambda}_{n,1,j}\!-n+\frac{j-1}{2}\}_{n\in\mathbb{Z}}\|_{l_{2}}\!\leq\! r<\frac{1}{2}$, then
\begin{align}
 \|\{\lambda_{n,1,j}\!-\!\widetilde{\lambda}_{n,1,j}\}_{n\in\mathbb{Z}}\|_{l_{2}}\!\leq\! 2r<1,\;\; j=1,2, \nonumber
\end{align}
which implies
\begin{align}\label{5.6}
 \|\{\lambda_{n,1,j}\!-\!\widetilde{\lambda}_{n,1,j}\}_{n\in\mathbb{Z}}\|_{l_{2}}^{2}\!
 \leq\!\|\{\lambda_{n,1,j}\!-\!\widetilde{\lambda}_{n,1,j}\}_{n\in\mathbb{Z}}\|_{l_{2}}, \;\; j=1,2.
\end{align}
In view of the estimates (\ref{5.5}) and (\ref{5.6}), we arrive at the assertion of Theorem  \ref{th4}. $\square$

Finally, we give the proof of Theorem \ref{th4.2}.

\textbf{Proof of Theorem \ref{th4.2}.} According Lemma \ref{lem3}, we have
\begin{equation}
  |\gamma_{2,1}(x)|\leq 2\|w_{2,1}\|_{L_{2}(a,\pi)}\|w_{2,2}\|_{L_{2}(a,\pi)}, \;\;x\in(\frac{3a}{2},\pi-\frac{a}{2}),\nonumber
\end{equation}
then
\begin{align}
  \int_{\frac{3a}{2}}^{\pi-\frac{a}{2}}|q(x)|^{2}dx=\int_{\frac{3a}{2}}^{\pi-\frac{a}{2}}|w_{2,1}(x)-\gamma_{2,1}(x)|^{2}dx \quad\:\nonumber\\
  \leq 2\int_{\frac{3a}{2}}^{\pi-\frac{a}{2}}|w_{2,1}(x)|^{2}dx+2\int_{\frac{3a}{2}}^{\pi-\frac{a}{2}}|\gamma_{2,1}(x)|^{2}dx \qquad\nonumber\\
  \leq 2\int_{\frac{3a}{2}}^{\pi-\frac{a}{2}}|w_{2,1}(x)|^{2}dx+8\pi\|w_{2,1}\|_{L_{2}(a,\pi)}^{2}\|w_{2,2}\|_{L_{2}(a,\pi)}^{2}.\!\!\!\!\!\!\!\!
  \nonumber
\end{align}
This along with Lemma \ref{lem2} yield
\begin{align}
  \int_{a}^{\pi}|q(x)|^{2}dx=\int_{a}^{\frac{3a}{2}}|q(x)|^{2}dx+\int_{\frac{3a}{2}}^{\pi-\frac{a}{2}}|q(x)|^{2}dx
  +\int_{\pi-\frac{a}{2}}^{\pi}|q(x)|^{2}dx \!\!\nonumber\\
  \leq\int_{a}^{\pi}|w_{2,1}(x)|^{2}dx+8\pi\|w_{2,1}\|_{L_{2}(a,\pi)}^{2}\|w_{2,2}\|_{L_{2}(a,\pi)}^{2}\quad\;\:\nonumber\\
  \leq8\pi\|w_{2,1}\|_{L_{2}(a,\pi)}^{2}(1+\|w_{2,2}\|_{L_{2}(a,\pi)}^{2}),\qquad\qquad\qquad\quad\!\nonumber
\end{align}
which implies
\begin{align}\label{5.1.2}
  \|q\|_{L_{2}(a,\pi)}\leq 2\sqrt{2\pi}\|w_{2,1}\|_{L_{2}(a,\pi)}\sqrt{1+\|w_{2,2}\|_{L_{2}(a,\pi)}^{2}}\qquad\qquad\qquad\!\!\nonumber\\
  \leq 2\sqrt{2\pi}\|w_{2,1}\|_{L_{2}(a,\pi)}(1+\|w_{2,2}\|_{L_{2}(a,\pi)})\qquad\qquad\qquad\!\nonumber\\
  \leq 2\sqrt{2\pi}(\|w_{2,1}\|_{L_{2}(a,\pi)}+\|w_{2,1}\|_{L_{2}(a,\pi)}^{2}+\|w_{2,2}\|_{L_{2}(a,\pi)}^{2}).\!\!
\end{align}
Similarly, we get
\begin{align}\label{5.2.2}
   \|p\|_{L_{2}(a,\pi)}\!\leq 2\sqrt{2\pi}(\|w_{2,2}\|_{L_{2}(a,\pi)}\!+\!\|w_{2,1}\|_{L_{2}(a,\pi)}^{2}\!+\!\|w_{2,2}\|_{L_{2}(a,\pi)}^{2}).
\end{align}

Using the estimates (\ref{3.2-2}), (\ref{5.1.2}) and (\ref{5.2.2}), we obtain

\begin{align}\label{5.3.2}
   \|q\|_{L_{2}(a,\pi)}\!+\!\|p\|_{L_{2}(a,\pi)}\!\leq \!16\pi\Big(\|u_{2,1}\|_{L_{2}(a-\pi,\pi-a)}\!+\!\|u_{2,2}\|_{L_{2}(a-\pi,\pi-a)} \!\!\!\! \nonumber\\
   \!+\|u_{2,1}\|_{L_{2}(a-\pi,\pi-a)}^{2}\!+\!\|u_{2,2}\|_{L_{2}(a-\pi,\pi-a)}^{2}\Big).
\end{align}
According Lemma \ref{lem6}, for any fixed $r\in(0,\frac{1}{2})$, there exists $C_{r}>0$ such that the estimate
\begin{align}\label{5.4.2}
  \|u_{2,j}-\widetilde{u}_{2,j}\|_{L_{2}(a-\pi,\pi-a)}\leq C_{r}\|\{\lambda_{n,2,j}-\widetilde{\lambda}_{n,2,j}\}_{n\in\mathbb{Z}}\|_{l_{2}}
\end{align}
is fulfilled as soon as $\|\{\lambda_{n,2,j}\!-n+\frac{j}{2}\}_{n\in\mathbb{Z}}\|_{l_{2}}\!\leq\! r$ and $\|\{\widetilde{\lambda}_{n,2,j}\!-n+\frac{j}{2}\}_{n\in\mathbb{Z}}\|_{l_{2}}\!\leq\! r$.

Next, we use one and the same symbol $C_{r}$ for denoting different positive constant depended only on $r$. Using the estimates (\ref{5.3.2}) and
(\ref{5.4.2}), we obtain
\begin{align}\label{5.5.2}
 \|q-\widetilde{q}\|_{L_{2}(a,\pi)}\!+\!\|p-\widetilde{p}\|_{L_{2}(a,\pi)}\qquad\qquad\qquad\qquad\quad\qquad\!\nonumber\\
 \leq \!C_{r}\Big(\|u_{2,1}-\widetilde{u}_{2,1}\|_{L_{2}(a-\pi,\pi-a)}\!+\!\|u_{2,2}-\widetilde{u}_{2,2}\|_{L_{2}(a-\pi,\pi-a)}\nonumber\\
 \!+\|u_{2,1}-\widetilde{u}_{2,1}\|_{L_{2}(a-\pi,\pi-a)}^{2}\!+\!\|u_{2,2}-\widetilde{u}_{2,2}\|_{L_{2}(a-\pi,\pi-a)}^{2}\Big)\!\!\!\!\!\nonumber\\
 \leq \!C_{r}\Big(\|\{\lambda_{n,2,1}-\widetilde{\lambda}_{n,2,1}\}_{n\in\mathbb{Z}}\|_{l_{2}}\!
 +\!\|\{\lambda_{n,2,2}-\widetilde{\lambda}_{n,2,2}\}_{n\in\mathbb{Z}}\|_{l_{2}} \, \nonumber\\
 \!+\|\{\lambda_{n,2,1}-\widetilde{\lambda}_{n,2,1}\}_{n\in\mathbb{Z}}\|_{l_{2}}^{2}\!
 +\!\|\{\lambda_{n,2,2}-\widetilde{\lambda}_{n,2,2}\}_{n\in\mathbb{Z}}\|_{l_{2}}^{2}\Big)\!\!\!
\end{align}
is fulfilled as soon as $\|\{\lambda_{n,2,j}\!-n+\frac{j}{2}\}_{n\in\mathbb{Z}}\|_{l_{2}}\!\leq\! r$ and $\|\{\widetilde{\lambda}_{n,2,j}\!-n+\frac{j}{2}\}_{n\in\mathbb{Z}}\|_{l_{2}}\!\leq\! r$.

Since $\|\{\lambda_{n,2,j}\!-n+\frac{j}{2}\}_{n\in\mathbb{Z}}\|_{l_{2}}\!\leq\! r<\frac{1}{2}$ and $\|\{\widetilde{\lambda}_{n,2,j}\!-n+\frac{j}{2}\}_{n\in\mathbb{Z}}\|_{l_{2}}\!\leq\! r<\frac{1}{2}$, then
\begin{align}
 \|\{\lambda_{n,2,j}\!-\!\widetilde{\lambda}_{n,2,j}\}_{n\in\mathbb{Z}}\|_{l_{2}}\!\leq\! 2r<1,\;\; j=1,2, \nonumber
\end{align}
which implies
\begin{align}\label{5.6.2}
 \|\{\lambda_{n,2,j}\!-\!\widetilde{\lambda}_{n,2,j}\}_{n\in\mathbb{Z}}\|_{l_{2}}^{2}\!
 \leq\!\|\{\lambda_{n,2,j}\!-\!\widetilde{\lambda}_{n,2,j}\}_{n\in\mathbb{Z}}\|_{l_{2}}, \;\; j=1,2.
\end{align}
In view of the estimates (\ref{5.5.2}) and (\ref{5.6.2}), we arrive at the assertion of Theorem \ref{th4.2}. $\square$

\qquad
	
	%%%%%%%%%%%%%%%%%%%%%%%%%%%%%%%%%%%%%
	\noindent {\bf Acknowledgments.}
	This work was supported in part by  the National Natural Science Foundation of China (11871031) and the National Natural Science Foundation of Jiang Su (BK20201303).


\begin{thebibliography}{99}
\bibitem{A-H-M} Albeverio S., Hryniv R. and Mykytyuk Ya., Inverse spectral problems for Dirac operators with summable potentials,  \textit{Russ. J. Math. Phys.} \textbf{12} (2005) no.4, 406-423.

\bibitem{B-Y1} Bondarenko N.P. and Yurko V.A., An inverse problem for Sturm-Liouville differential operators with deviating argument, \textit{Appl. Math. Lett.} \textbf{83} (2018) 140-144.

\bibitem{B-Y2} Bondarenko N.P. and Yurko V.A., Partial inverse problems for the Sturm-Liouville equation with deviating argument, \textit{Math. Meth. Appl. Sci.} \textbf{41} (2018) 8350-8354.

\bibitem{Buter1} Buterin S.A., On the uniform stability of recovering sine-type functions with asymptotically separated zeros,  \textit{Math. Notes}, \textbf{111} (2022) no.3, 343-355.

\bibitem{Buter} Buterin S.A., Functional-differential operators on geometrical graphs with global delay and inverse spectral problems,  \textit{Results Math.} (2023) 78:79.

\bibitem{Buter2} Buterin S.A. and Djuri\'{c} N., Inverse problems for Dirac operators with constant delay: uniqueness, characterization, uniform Stability,  \textit{Lobachevskii Journal of Mathematics}, \textbf{43} (2022) no.6, 1492-1501.

\bibitem{B-M-S} Buterin S.A., Malyugina M.A. and Shieh C.-T., An inverse spectral problem for second-order functional-differential pencils with two delays, \textit{Appl. Math. Comput.} \textbf{411} (2021) 126475.

\bibitem{B-P-Y} Buterin S.A., Pikula M. and Yurko V.A., Sturm-Liouville differential operators with deviating argument,  \textit{Tamkang J. Math.} \textbf{48} (2017) no.1, 61-71.

\bibitem{B-Y} Buterin S.A. and Yurko V.A., An inverse spectral problem for Sturm-Liouville operators with a large delay, \textit{Anal. Math. Phys.} \textbf{9} (2019) no.1, 17-27.

\bibitem{D-B1} Djuri\'{c} N. and Buterin S.A., On an open question in recovering Sturm-Liouville-type operators with delay, \textit{Appl. Math. Lett.} (2021) 106862.

\bibitem{D-B2} Djuri\'{c} N. and Buterin S.A., On non-uniqueness of recovering Sturm-Liouville operators with delay, \textit{Commun. Nonlinear Sci. Numer. Simulat.} \textbf{102} (2021) 105900 .

\bibitem{D-B3} Djuri\'{c} N. and Buterin S.A., Iso-bispectral potentials for Sturm-Liouville-type operators with small delay,  \textit{Nonlin. Anal.: Real World Appl.} \textbf{63} (2022) 103390.

\bibitem{D-Vl} Djuri\'{c} N. and Vladi\v{c}i\'{c} V., Incomplete inverse problem for Sturm-Liouville type differential equation with constant delay, \textit{Results Math.} (2019) 74:161.

\bibitem{D-V0} Djuri\'{c} N. and Vojvodi\'{c} B., Inverse problem for Dirac operators with a constant delay less than half the length of the interval,  \textit{Appl. Anal.  Discrete math.} \textbf{17} (2023) , 249-261.

\bibitem{F-Y} Freiling G. and Yurko V.A., Inverse problems for Sturm-Liouville differential operators with a constant delay, \textit{Appl. Math. Lett.} \textbf{25} (2012) 1999-2004.

\bibitem{G-D} Gasymov M.G. and Dzabiev T.T., Solution of the inverse problem by two spectra for the Dirac equation ona finite interval, \textit{Akad. Nauk Azerb. SSR Dokl.} \textbf{22} (1966) no.7, 3-6.

\bibitem{G-L} Gasymov M.G. and Levitan B.M., The inverse problem for the Dirac system, \textit{Dokl Akad. Nauk SSSR} \textbf{167} (1966) 967-970.

\bibitem{G-Y} Gorbunov O.B. and Yurko V.A., Inverse problem for Dirac system with singularities in interior points, \textit{Anal. Math. Phys.} \textbf{6} (2016) 1-29.

\bibitem{Hryn} Hryniv R.O., Analyticity and uniform stability in the inverse spectral problem for Dirac operators, \textit{J. Math. Phys.} \textbf{52} (2011) 063513.

\bibitem{Ignat} Ignatiev M.Yu., On an inverse Regge problem for the Sturm-Liouville operator with deviating argument,  \textit{J. Samara State Tech. Univ., Ser. Phys. Math. Sci.}  \textbf{22} (2018) no.2, 203-211.

\bibitem{L-S} Levitan B.M. and Sargsyan I.S., Sturm-Liouville and Dirac Operators (Nauka, Moscow, 1988; Kluwer Academic, Dordrecht, 1991).

\bibitem{Makin} Makin A.,  On the spectrum of non-self-adjoint Dirac operators with quasi-periodic boundary conditions, \textit{Proc. Royal Soc. Edinburgh: Sec. A Math.} (2022). https://doi.org/10.1017/prm.2022.33

\bibitem{Mala} Malamud M.M.,  Uniqueness questions in inverse problems for systems of differential equations on a finite interval, \textit{Trans. Mosc. Math. Soc.} \textbf{60} (1999) 204-262.


\bibitem{Pikula} Pikula M., Determination of a Sturm-Liouville-type differential operator with delay argument from two spectra, \textit{Mat. Vestnik} \textbf{43} (1991) no.3-4, 159-171.

\bibitem{V-P} Vladi\v{c}i\'{c} V. and Pikula M., An inverse problem for Sturm-Liouville-type differential equation with a constant delay, \textit{Sarajevo J. Math.} \textbf{12} (2016) no.1, 83-88.

\bibitem{V-P-V} Vladi\v{c}i\'{c} V., Pikula M. and Vojvodi\'{c} B., Inverse spectral problems for Sturm-Liouville operators with a constant delay less than half the length of the interval and Robin boundary conditions, \textit{Results Math.} (2019) 74:45.

\bibitem{W-Y1} Wang F. and Yang C.-F., Traces for Sturm-Liouville operators with constant delays on a star graph, \textit{Results Math.} (2021) 76:220.

\bibitem{W-Y2} Wang F. and Yang C.-F., A partial inverse problem for the Sturm-Liouville operator with constant delays on a star graph, \textit{Results Math.} (2022) 77:192.


\bibitem{Yang} Yang C.-F., Inverse nodal problems for the Sturm-Liouville operator with a constant delay, \textit{J. Diff. Eqns.} \textbf{257} (2014) no.4, 1288-1306.

\bibitem{Yurko} Yurko V.A., An inverse spectral problem for second order differential operators with retarded argument, \textit{Results Math.} (2019) 74:71.

		
\end{thebibliography}
\end{document}